\documentclass[11pt]{amsart}

\usepackage{color,enumitem}
\usepackage{amsmath,amssymb,bbm,bm}
\usepackage{amscd,tikz-cd}
\usepackage{mathrsfs}
\usepackage{amsthm}
\usepackage{mathtools, todonotes}

\theoremstyle{plain}

\newtheorem{theorem}{\bf Theorem}[section]
\newtheorem{lemma}[theorem]{\bf Lemma}
\newtheorem{proposition}[theorem]{\bf Proposition}

\theoremstyle{definition}
\newtheorem{definition}[theorem]{\bf Definition}
\newtheorem{example}[theorem]{\bf Example}
\newtheorem{remark}[theorem]{\bf Remark}

\newtheorem{question}[theorem]{\bf Question}

\newcommand{\disp}{\displaystyle}

\newcommand{\eqa}[1]{
\begin{align*}
#1
\end{align*}}

\newcommand{\ri}{{\rm i}}

\newcommand{\E}{\mathrm{E}}

\newcommand{\N}{\mathbb{N}}
\newcommand{\Z}{\mathbb{Z}}
\newcommand{\Ad}{\mathrm{Ad}}
\newcommand{\Aut}{\mathrm{Aut}}

\usepackage[all]{xy}

\setlength{\topmargin}{-0.7in}
\setlength{\oddsidemargin}{0.15in}
\setlength{\evensidemargin}{0.19in}

\setlength{\textwidth}{6.32in}
\setlength{\textheight}{9.7in}
\setcounter{tocdepth}{2}

\title[Kirchberg's lemma and approximately inner automorphisms]{An application of Kirchberg's lemma on central sequence algebras to groups of approximately inner automorphisms}
\author[H. Ando]{Hiroshi Ando}
\address{Hiroshi Ando, Department of Mathematics and Informatics, Chiba University, 1-33 Yayoi-cho, Inage, Chiba, 263- 8522,
Japan}
\email{hiroando@math.s.chiba-u.ac.jp}

\author[M. Doucha]{Michal Doucha}
\address{Institute of Mathematics\\
Czech Academy of Sciences\\
\v Zitn\'a 25\\
115 67 Praha 1\\
Czech Republic}
\email{doucha@math.cas.cz}

\subjclass[2020]{46L05, 46M07, 22F50}
\keywords{$C^*$-algebras, ultraproducts, approximately inner automorphisms}
\begin{document}

\maketitle
\bibliographystyle{siam}
\begin{center}
    {\it Dedicated to the memory of Professor Eberhard Kirchberg}
\end{center}
\begin{abstract}
We revisit a well-known ``surjectivity onto quotient" type lemma of Kirchberg on the central sequence algebra of a separable unital {\rm C}$^*$-algebra, and use it to prove a ``surjectivity onto quotient" result on approximately inner automorphisms of a separable unital {\rm C}$^*$-algebra of stable rank one, which we can partially upgrade also to the non-separable case.
\end{abstract}
\section{Introduction}
Let us recall the following well-known 
lemma, proved by Kirchberg in an unpublished preprint, on the central sequence algebra of a separable unital C$^*$-algebra. 
\begin{proposition}\cite[Lemma 2.6]{Kirchberg2003}\label{prop kirchberg lifting of cs}
        Let $A$ be a unital {\rm C}$^*$-algebra, $J$ be a nonzero proper closed two-sided ideal of $A$ and $B$ be a separable {\rm C}$^*$-subalgebra of $A_{\omega}$. Then the following statements hold. 
        \begin{list}{}{}
            \item[{\rm (i)}] The ultrapower map $(\pi_J)_{\omega}$ of the quotient map $\pi_J\colon A\to A/J$ induces the following exact sequence:
        \[0\longrightarrow J_{\omega}\xrightarrow{\iota_{\omega}} A_{\omega}\xrightarrow{(\pi_J)_{\omega}}(A/J)_{\omega}\longrightarrow 0.\]
        Here, $\iota\colon J\to A$ is the inclusion map. 
        \item[{\rm (ii)}] There exists a positive contraction $e\in B'\cap J_{\omega}$ such that $eb=b=be$ for all $b\in B\cap J_{\omega}$.  
        \item[{\rm (iii)}] The sequence 
        \begin{equation}
            0\longrightarrow B'\cap J_{\omega}\xrightarrow{\iota_{\omega}} B'\cap A_{\omega}\xrightarrow{(\pi_J)_{\omega}}(\pi_J)_{\omega}(B)'\cap (A/J)_{\omega}\longrightarrow 0
        \end{equation}
        is also exact.  
        \end{list}
\end{proposition}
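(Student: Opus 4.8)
The plan is to handle the three parts in order, with part (ii) carrying the real weight and parts (i) and (iii) being comparatively formal once (ii) is available. For (i) I would work at the level of representing sequences, writing $A_\omega=\ell^\infty(A)/c_\omega(A)$. Injectivity of $\iota_\omega$ follows from $c_\omega(J)=\ell^\infty(J)\cap c_\omega(A)$, since a bounded sequence in $J$ that is $\omega$-null in $A$ is already $\omega$-null in $J$. Surjectivity of $(\pi_J)_\omega$ follows by lifting each coordinate $z_n\in A/J$ to some $a_n\in A$ with $\|a_n\|\le\|z_n\|+1/n$ (the quotient norm is approximately attained), so that $(a_n)\in\ell^\infty(A)$ maps to the prescribed class. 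Exactness in the middle reduces to the observation that if $(\pi_J)_\omega$ annihilates the class of $(a_n)$ then $\mathrm{dist}(a_n,J)\to_\omega 0$, whence choosing $b_n\in J$ with $\|a_n-b_n\|\le\mathrm{dist}(a_n,J)+1/n$ exhibits $(a_n)$ as $\iota_\omega$ of the class of $(b_n)$. The only point needing care is that all these choices stay uniformly bounded, which the $+1/n$ slack guarantees.

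Part (ii) is the crux, and I expect the diagonal construction to be the main obstacle, especially in guaranteeing that it survives the non-separability of $A$. First I would record, using (i), that every element of $B\cap J_\omega$ admits a representing sequence in $\ell^\infty(J)$. Then fix countable dense sequences $(b^{(k)})_k$ in $B$ and $(c^{(k)})_k$ in $B\cap J_\omega$, with representatives $(b^{(k)}_n)_n$ in $\ell^\infty(A)$ and $(c^{(k)}_n)_n$ in $\ell^\infty(J)$. The element $e$ will be built coordinatewise: for each $n$ I want a positive contraction $e_n\in J$ with $\|[e_n,b^{(k)}_n]\|<1/n$ and $\|e_n c^{(k)}_n-c^{(k)}_n\|,\ \|c^{(k)}_n e_n-c^{(k)}_n\|<1/n$ for all $k\le n$. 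Such $e_n$ exists because, passing to the separable subalgebra $D_n=C^*(1,b^{(1)}_n,\dots,b^{(n)}_n,c^{(1)}_n,\dots,c^{(n)}_n)$, the ideal $D_n\cap J$ carries a sequential approximate unit that is quasicentral in $D_n$ (Arveson); a sufficiently advanced member simultaneously approximately commutes with the finitely many $b^{(k)}_n$ and acts as an approximate unit on the finitely many $c^{(k)}_n\in D_n\cap J$. This localization is exactly what defeats non-separability, since only finitely many coordinates of finitely many sequences enter at stage $n$. Taking $e$ to be the class of $(e_n)$ yields a positive contraction in $J_\omega$; for $n\ge k$ the estimates force $[e,b^{(k)}]=0$ and $ec^{(k)}=c^{(k)}=c^{(k)}e$, and density together with continuity upgrades this to $e\in B'$ and $eb=b=be$ for all $b\in B\cap J_\omega$.

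For (iii), injectivity of $\iota_\omega$ on $B'\cap J_\omega$ and the inclusion of its image in $B'\cap A_\omega$ are immediate, and exactness in the middle is formal from (i): the kernel of $(\pi_J)_\omega$ restricted to $B'\cap A_\omega$ equals $(B'\cap A_\omega)\cap J_\omega=B'\cap J_\omega$. The substantive claim is surjectivity onto $(\pi_J)_\omega(B)'\cap(A/J)_\omega$, and here (ii) does the work. Given $y$ in the target, I would use (i) to lift it to some $\tilde x\in A_\omega$ with $(\pi_J)_\omega(\tilde x)=y$. For each $b\in B$ one has $(\pi_J)_\omega([\tilde x,b])=[y,(\pi_J)_\omega(b)]=0$, so $[\tilde x,b]\in J_\omega$ by (i); moreover $[\tilde x,b]\in C^*(B,\tilde x)$. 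The key move is therefore to apply (ii) not to $B$ but to the separable algebra $B_1=C^*(B,\tilde x)$, obtaining a positive contraction $e_1\in B_1'\cap J_\omega$ that acts as a unit on $B_1\cap J_\omega$, a set that contains every commutator $[\tilde x,b]$. Setting $x:=(1-e_1)\tilde x$, we get $(\pi_J)_\omega(x)=y$ since $e_1\tilde x\in J_\omega$, and for $b\in B$, using that $e_1$ commutes with $b$, one computes $[x,b]=(1-e_1)[\tilde x,b]=[\tilde x,b]-e_1[\tilde x,b]=0$. Thus $x\in B'\cap A_\omega$ lifts $y$, which establishes surjectivity and completes the exact sequence.
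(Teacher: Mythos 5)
Your proof is correct and follows essentially the same route as the paper's: coordinatewise lifting with norm control for (i), a diagonal argument with quasicentral approximate units for (ii), and the cut-down $x=(1-e_1)\tilde x$ with $e_1\in C^*(B,\tilde x)'\cap J_\omega$ for (iii). The only (immaterial) difference is in (ii), where the paper imposes the approximate-unit condition on a single strictly positive element $s$ of the separable algebra $B\cap J_\omega$ and then deduces $eb=b=be$ for all $b\in B\cap J_\omega$ from $\overline{s(B\cap J_\omega)s}=B\cap J_\omega$, whereas you run the diagonal over a countable dense subset of $B\cap J_\omega$ directly; both arguments are valid.
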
 
Since it will play a key role in this notes but is unfortunately never published in a refereed journal, we include his proof in the appendix. 
Setting $B=A$, (iii) shows in particular that the natural map $A'\cap A_{\omega}\to (A/J)'\cap (A/J)_{\omega}$ induced by the quotient map $A\to A/J$ is onto. The proof is elementary in the sense that it just uses strictly positive elements and quasi-central approximate units. It is however of fundamental importance to the study of central sequence algebras, especially for non-simple C$^*$-algebras. It is also worth noticing that many important structural results regarding the central sequence algebra can be shown in the very similar fashion using basic tools, as Kirchberg  demonstrated e.g. in the work on his central sequence algebra $F_{\omega}(A)$ for non-unital C$^*$-algebra \cite{kirchbergAbelMR2265050}.   
Even though \cite{Kirchberg2003} has never been published in a refereed journal, this lemma (and other results from it) has been used for the structural study of central sequence algebras. Less obviously, it has some application to the central sequence algebra of $W^*$-algebras as well. For example, the first named author and Kirchberg showed \cite{MR3532173}, generalizing the above surjectivity result to the $F_{\omega}(A)$ setting, that whenever $A$ is a separable simple non-type I C$^*$-algebra, then $F_{\omega}(A)$ contains uncountably many nonseparable type III factors as its sub-quotients, indicating that for a large class of C$^*$-algebras, $F_{\omega}(A)$ is quite large. This asserts in particular that even though the central sequence algebra of the group von Neumann algebra $L(\mathbb{F}_2)$ of the free group $\mathbb{F}_2$ is trivial, the central sequence C$^*$-algbera of the reduced group C$^*$-algebra $C_r^*(\mathbb{F}_2)$ is highly non-commutative and huge, which answered a question of Kirchberg himself in \cite{kirchbergAbelMR2265050}. 
In this notes, we apply Proposition~\ref{prop kirchberg lifting of cs} to prove a surjectivity-onto-quotient type result in the context of the groups of approximately inner automorphisms. To explain in more detail, we introduce some terminology following \cite{Rob19}. 
Let $A$ be a unital ${\rm C}^*$-algebra. Denote by $U(A)$ the group of unitary elements of $A$ equipped with the norm topology and by $U_A$ its connected component of the identity. By $V_A$ we denote the group of approximately inner automorphisms induced by $U_A$. That is, \[V_A:=\overline{\{\Ad(u)\mid u\in U_A\}}\subseteq \Aut(A),\] where for every $u\in U(A)$, $\Ad(u)\in\Aut(A)$ is the corresponding inner automorphism, and the group $\Aut(A)$ is equipped with the pointwise convergence topology. 
Similarly, if $I\subseteq A$ is a closed two-sided ideal, by $U(\tilde{I})$, resp. $U_I$, resp. $V_I$ we denote the unitary group of the unitization $\tilde{I}$ of $I$, resp. normal subgroups $U_{\widetilde{I}}$, resp. $\overline{\{\Ad(u)\colon u\in U_I\}}\unlhd V_A$.

The group $V_A$ plays an important role in determining the structure of $\Aut(A)$ as we have the normal series $V_A\unlhd V(A):=\overline{\{\Ad(u)\mid u\in U(A)\}}\unlhd \Aut(A)$. The quotients $V(A)/V_A$ and $\Aut(A)/V(A)$ are in some cases tractable, see e.g. \cite{ElRo}, where the case when $A$ is a real rank zero unital inductive limit of circle algebras is treated and $\Aut(A)/V(A)$ and $V(A)/V_A$ are described. Therefore, it is of interest to understand the structure of $V_A$ itself better. Robert in \cite{Rob19} proves that $V_A$ is (topologically) simple if $A$ is simple and a more detailed description of the normal subgroup structure of $V_A$, for a general unital $A$, is obtained in \cite{AnDo}, where the following main result of this paper finds an important application. We fix a free ultrafilter $\omega$ on $\N$.

\begin{theorem}\label{thm:main}
Let $A$ be a separable unital ${\rm C}^*$-algebra and $I\subseteq A$ be a closed proper two-sided ideal with connected unitary group $U(\tilde{I})$.
Assume moreover that either the central sequence algebra $A'\cap A_{\omega}$ has stable rank one or the unitary group $U((A/I)'\cap (A/I)_{\omega})$ is connected. 
Then the canonical quotient map $P:V_A\to V_{A/I}$ satisfies
\begin{enumerate}
    \item\label{it:1} $\mathrm{Ker}(P)=V_I$;
    \item\label{it:2} $P$ naturally factorizes as $\tilde P\circ Q$, where $Q:V_A\to V_A/V_I$ is the quotient map and $\tilde P:V_A/V_I\to V_{A/I}$ is a topological group isomorphism of $V_A/V_I$ onto its image. Consequently, $P[V_A]=V_{A/I}$ holds.
\end{enumerate}
\end{theorem}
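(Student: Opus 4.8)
The plan is to regard $P$ as the continuous homomorphism induced by $\pi_I\colon A\to A/I$: every $\Ad(u)$ with $u\in U_A$ descends to $\Ad(\pi_I(u))$ with $\pi_I(u)\in U_{A/I}$, so the assignment $\alpha\mapsto\bar\alpha$ (the automorphism of $A/I$ induced by $\alpha$) extends by continuity to all of $V_A$ and lands in $V_{A/I}$; thus $P$ is a well-defined continuous homomorphism. For \eqref{it:1} the inclusion $V_I\subseteq\mathrm{Ker}(P)$ is immediate, since any $u\in U(\widetilde{I})$ has scalar image in $A/I$ and hence $P(\Ad(u))=\mathrm{id}$. The content is the reverse inclusion, and this is where Proposition~\ref{prop kirchberg lifting of cs} enters. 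Fixing $\alpha\in\mathrm{Ker}(P)$ and using separability of $A$, I write $\alpha=\lim_n\Ad(u_n)$ with $u_n\in U_A$ and set $U:=[(u_n)]\in U(A_\omega)$, so that $\Ad(U)$ restricts to $\alpha$ on $A\subseteq A_\omega$. Because $\bar\alpha=\mathrm{id}$, the image $\bar U:=(\pi_I)_\omega(U)$ is a unitary lying in $(A/I)'\cap (A/I)_\omega$. Applying Proposition~\ref{prop kirchberg lifting of cs}(iii) with $B=A$ makes $(\pi_I)_\omega\colon A'\cap A_\omega\to (A/I)'\cap (A/I)_\omega$ a surjective $*$-homomorphism, and the goal becomes to lift $\bar U$ to a \emph{unitary} $W\in U(A'\cap A_\omega)$.

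Here the two alternative hypotheses do their work. If $U((A/I)'\cap (A/I)_\omega)$ is connected, then $\bar U$ is a finite product of exponentials of self-adjoints, each of which lifts to a self-adjoint in $A'\cap A_\omega$, and exponentiating yields $W$. If instead $A'\cap A_\omega$ has stable rank one, I would invoke the lifting of unitaries along a quotient that holds in that setting (combining density of the invertibles with the standard exponential lifting of unitaries near the identity) to produce $W$ directly. I expect this step --- producing a unitary central lift of $\bar U$, not merely a contractive lift --- to be the main obstacle, and the reason both hypotheses are imposed. Once $W$ is at hand, $V:=UW^{*}$ is a unitary of $A_\omega$ with $(\pi_I)_\omega(V)=1$, so $V-1\in I_\omega$ by Proposition~\ref{prop kirchberg lifting of cs}(i), while centrality of $W$ gives $\Ad(V)|_A=\Ad(U)|_A=\alpha$. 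Choosing unitary representatives $V=[(v_n)]$, the relation $V-1\in I_\omega$ says $\pi_I(v_n)\to 1$ along $\omega$, so a small exponential correction replaces $v_n$ by a unitary $w_n$ with $w_n-1\in I$, i.e. $w_n\in U(\widetilde{I})$, and $\|w_n-v_n\|\to_\omega 0$. Connectedness of $U(\widetilde{I})$ then forces $w_n\in U_I$, and since $\Ad(v_n)\to_\omega\alpha$ pointwise we obtain $\Ad(w_n)\to_\omega\alpha$, witnessing $\alpha\in V_I$. This proves \eqref{it:1}.

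For \eqref{it:2} the factorization $P=\tilde P\circ Q$ with $\tilde P$ a continuous injective homomorphism is formal once $\mathrm{Ker}(P)=V_I$ is known; what remains is that $\tilde P$ is open onto its image and that this image is all of $V_{A/I}$. I would establish openness by running the argument above relatively: if $\alpha_k\in V_A$ with $P(\alpha_k)\to\mathrm{id}$, a diagonal choice of single inner approximants $\Ad(u_k)$ with $\pi_I(u_k)$ asymptotically central produces, via the same central-unitary lift and $I_\omega$-correction, elements of $V_I$ approximating the $\alpha_k$; hence $\mathrm{dist}(\alpha_k,V_I)\to 0$, which is precisely continuity of $\tilde P^{-1}$ at the identity, and by homogeneity everywhere. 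Thus $\tilde P$ is a topological isomorphism onto $P[V_A]$. Since $V_A$ and $V_{A/I}$ are closed subgroups of the Polish groups $\Aut(A)$ and $\Aut(A/I)$, they are themselves Polish, so $P[V_A]\cong V_A/V_I$ is a Polish subgroup of $V_{A/I}$ and therefore closed. Finally $P[V_A]$ is dense, because each generator $\Ad(\bar w)$ of $V_{A/I}$ with $\bar w\in U_{A/I}=U_0(A/I)$ lifts through $\pi_I$ to some $w\in U_0(A)=U_A$ with $P(\Ad(w))=\Ad(\bar w)$. A dense closed subgroup is the whole group, so $P[V_A]=V_{A/I}$, completing \eqref{it:2}.
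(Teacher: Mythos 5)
Your proposal is correct and follows essentially the same route as the paper's proof: lift the asymptotically central unitary $(\pi_I(u_n))_\omega$ through Proposition~\ref{prop kirchberg lifting of cs}(iii) to a unitary of $A'\cap A_\omega$ (via exponentials of lifted self-adjoints in the connected case, via stable rank one otherwise), use that central lift to push the $u_n$ into $U(\widetilde{I})$ up to a vanishing error, invoke connectedness of $U(\widetilde{I})$ to land in $U_I$, and finish surjectivity by the fact that a dense Polish subgroup of a Polish group is everything. The only deviations are in implementation: you package the stable-rank-one step as the standard lifting of unitaries along quotients of a stable-rank-one algebra, where the paper uses unitary polar decomposition in $A'\cap A_\omega$ (Proposition~\ref{prop unitarypd}), and in the openness step you diagonalize to single inner approximants where the paper works in the iterated ultrapower $A_{\omega'\otimes\omega}$; both variants are sound, modulo the routine subsequence extractions over a countable dense subset of $A$ (converting $\omega$-limits into genuine limits) that you leave implicit and that the paper carries out explicitly.
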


If $A$ is a separable unital AF algebra, then $A$ and $I$ satisfy all the hypotheses in Theorem \ref{thm:main}. Indeed, if $A$ is a separable unital AF algebra, then $A'\cap A_{\omega}$ has stable rank one (see e.g. the paragraph after \cite[Question 5.2]{MR3908669BBSTWW}) and moreover $U(A'\cap A_{\omega})$ is connected (this is likely a folklore as well, but we include the proof in Proposition \ref{prop F(AF) has connected unitary group} as we could not find a proper reference).
As a consequence, we get the following result for separable unital AF-algebras.
\begin{theorem}\label{thm:AFconsequence}
Let $A$ be a separable unital AF-algebra and $I\subseteq A$ be any proper closed two-sided ideal. Then the canonical quotient map $P:V_A\to V_{A/I}$ satisfies that $\mathrm{Ker}(P)=V_I$ and $P$ naturally factorizes as $\tilde P\circ Q$, where $Q:V_A\to V_A/V_I$ is the quotient map and $\tilde P:V_A/V_I\to V_{A/I}$ is a topological isomorphism.
\end{theorem}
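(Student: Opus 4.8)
The plan is to obtain Theorem~\ref{thm:AFconsequence} purely as a specialization of Theorem~\ref{thm:main}: the entire content is to check that a separable unital AF algebra $A$, together with an arbitrary proper closed two-sided ideal $I$, meets every hypothesis of the general statement, after which conclusions (\ref{it:1}) and (\ref{it:2}) transcribe verbatim. Concretely, I must verify three things: that $A$ is separable and unital (immediate from the assumption); that the unitary group $U(\tilde I)$ of the unitization of $I$ is connected; and that at least one of the two alternative side conditions holds, for which I will use that $A'\cap A_\omega$ has stable rank one.

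The only structural input requiring a short argument is the connectedness of $U(\tilde I)$. First I would recall the two standard permanence facts for the AF class: a closed two-sided ideal $I$ of an AF algebra is again AF, and hence its unitization $\tilde I$ is a \emph{unital} AF algebra. Then I would show that the unitary group of any unital AF algebra $B$ is connected. Writing $B=\overline{\bigcup_n F_n}$ as the closure of an increasing union of finite-dimensional unital subalgebras, any $u\in U(B)$ can be approximated in norm by unitaries $v_n\in U(F_n)$ (take the unitary part of the polar decomposition of a nearby element of some $F_n$); since each $U(F_n)$ is a finite direct product of matrix-algebra unitary groups and is therefore connected, every $v_n$ lies in the identity component $U_0(B)$. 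As $U_0(B)$ is open (being generated by exponentials) and hence closed in $U(B)$, the limit $u$ also lies in $U_0(B)$, so $U(B)=U_0(B)$. Applying this to $B=\tilde I$ gives the desired connectedness.

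For the remaining side condition I would simply invoke the known fact, recorded in the discussion preceding the theorem (citing \cite{MR3908669BBSTWW}), that for a separable unital AF algebra the central sequence algebra $A'\cap A_\omega$ has stable rank one; this places us in the first of the two alternatives of Theorem~\ref{thm:main}. With all hypotheses verified, Theorem~\ref{thm:main}(\ref{it:1}) yields $\mathrm{Ker}(P)=V_I$, and Theorem~\ref{thm:main}(\ref{it:2}) yields the factorization $P=\tilde P\circ Q$ with $\tilde P$ a topological group isomorphism onto its image together with $P[V_A]=V_{A/I}$; since the latter identity says the image is all of $V_{A/I}$, the map $\tilde P\colon V_A/V_I\to V_{A/I}$ is a topological isomorphism, exactly as claimed.

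I do not anticipate a genuine obstacle here, since the substance lives entirely in Theorem~\ref{thm:main}; the main point to be careful about is bookkeeping, namely checking that \emph{ideals and quotients stay within the AF class} and noting that only \emph{one} of the two alternative side conditions is needed. In particular, stable rank one of $A'\cap A_\omega$ already suffices, so the connectedness of $U(A'\cap A_\omega)$ recorded in the preceding paragraph, while true for AF algebras, is not required for this deduction.
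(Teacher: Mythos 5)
Your proposal matches the paper's own deduction: the paper obtains Theorem~\ref{thm:AFconsequence} by specializing Theorem~\ref{thm:main}, citing \cite{MR3908669BBSTWW} for stable rank one of $A'\cap A_\omega$ (and Proposition~\ref{prop F(AF) has connected unitary group} for the alternative connectedness condition), with the connectedness of $U(\tilde I)$ following from the AF permanence properties you describe. Your verification is correct and essentially identical in route, so nothing further is needed.
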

Theorem~\ref{thm:AFconsequence} is partially generalized to general unital locally AF algebras in $\S$\ref{subsec nonseparable} which is then applied in \cite{AnDo} to obtain a precise description of closed normal subgroups of $V_A$, where $A$ is any unital locally AF algebra.

Note that it is well-known that the identity component of the unitary group of a unital C$^*$-algebra is algebraically generated by exponentials of self-adjoint elements. Thus, it is clear that the map $P\colon V_A\to V_{A/I}$ has dense image and ${\rm Ker}(P)$ contains $V_I$. However, it is not obvious that the kernel of $P$ is precisely $V_I$ and that it is onto if $A$ is separable. We are grateful to the anonymous referee and to Leonel Robert who found gaps in the proof of Theorem~\ref{thm:main} in the previous version of the paper which forced us to add additional assumptions to the statement of the theorem, namely of ${\rm sr}(A'\cap A_{\omega})=1$ and $U(\widetilde{I})$ being connected.
 
It would be interesting to decide the equality of the kernel of $P$ with $V_I$ in general unital separable C$^*$-algebra $A$ as it is equivalent to a statement concerning the closed normal subgroup structure of $V_A$ (we refer to \cite{AnDo}). 

\begin{remark}\label{rem P onto undecidable}
After the paper has been refereed, we were informed from Ilijas Farah that the surjectivity question of $P$ is undecidable within axioms of ZFC for some non-separable $A$.  Indeed, take $A=\mathbb{B}(H)$ for separable infinite-dimensional $H$ and $I=\mathbb{K}(H)$. Under the 
continuum hypothesis (CH), Phillips--Weaver \cite{PhillipsWeaverMR2322680} showed that the Calkin algebra $Q(H)=\mathbb{B}(H)/\mathbb{K}(H)$ has (approximately inner) outer automorphisms. Moreover, from their arguments it can be shown that such automorphisms can be chosen to be the limit of inner automorphisms by unitaries with trivial Fredholm index (see also \cite[$\S$17]{FarahbookMR3971570} for alternative argument). 
Since $V_{\mathbb{B}(H)}$ is exactly the inner automorphisms, $P$ is not onto under CH. On the other hand, under open coloring axiom (OCA), Farah \cite{FarahCalkinMR2776359} showed that all automorphisms of $Q(H)$ are inner, whence $P$ is onto. 
\end{remark}

The paper is organized as follows. In $\S$\ref{subsec separablecase} we prove Theorem~\ref{thm:main}, where Kirchberg's lemma is directly applicable and sequences suffice for topological arguments. We show in this case that $\tilde{P}$ is a topological group isomorphism, for appropriate $A$ and $I$ covered by Theorem~\ref{thm:main}, of $V_A/V_I$ onto image. Thus, $P$ has dense image which is itself a Polish group. It follows that $P$ is onto. In $\S$\ref{subsec nonseparable} we prove a partial generalization of Theorem~\ref{thm:AFconsequence} in the non-separable case.

Finally, in $\S$\ref{sec Kirchberg central sequence} we also remark that a property of unital Kirchberg algebras discovered by Kirchberg himself can be used to show that the tensor product of $*$-homomorphisms $\theta_i\colon M_i\to N_i\,(i=1,2)$ between von Neumann algebras does not always extend to a $*$-homomorphism to $M_1\overline{\otimes}M_2\to N_1\overline{\otimes}N_2$, even though it does admit an extension as a unital completely positive map, as was shown by Nagisa--Tomiyama \cite{NagisaTomiyamaMR620290}.     

\section{Main Results}
The goal of this section is to prove Theorem~\ref{thm:main}. We first prove the theorem in the separable case and then upgrade to possibly non-separable locally AF algebras. We start with few preliminary results.
\begin{lemma}\label{lem:approxInnquotient}
Let $A$ be a unital {\rm C}$^*$-algebra. For every $\phi\in V_A$ and every two-sided ideal $I$ we have $\phi[V]\subseteq I$, thus $\phi$ naturally induces $\pi_I\circ \phi=:P(\phi)\in V_{A/I}$, where $\pi_I:A\rightarrow A/I$ is the quotient map. The map $P: V_A\rightarrow V_{A/I}$ is a continuous homomorphism satisfying $P(\mathrm{Ad}(u))=\mathrm{Ad}(\pi_I(u))$.
\end{lemma}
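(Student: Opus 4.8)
The plan is to verify the four assertions in order: the invariance $\phi[I]\subseteq I$, the well-definedness of $P(\phi)$ as a $*$-endomorphism of $A/I$, its membership in $V_{A/I}$, and finally the homomorphism/continuity properties together with the formula on inner automorphisms. Throughout I would use that $V_A$ is by definition the closure of the subgroup $\{\Ad(u):u\in U_A\}$ of $\Aut(A)$ in the topology of pointwise norm convergence, so every $\phi\in V_A$ is a limit of a net $\Ad(u_\lambda)$ with $u_\lambda\in U_A$.

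For the invariance (here I read the displayed $\phi[V]\subseteq I$ as $\phi[I]\subseteq I$), I would first note that for a unitary $u\in U(A)$ and $a\in I$ one has $\Ad(u)(a)=uau^*\in I$ because $I$ is a two-sided ideal; thus each $\Ad(u_\lambda)$ leaves $I$ invariant. Fixing $a\in I$, the net $\Ad(u_\lambda)(a)$ lies in $I$ and converges in norm to $\phi(a)$, and since $I$ is norm-closed this forces $\phi(a)\in I$, giving $\phi[I]\subseteq I$. Consequently $\pi_I\circ\phi$ annihilates $I=\ker\pi_I$ and therefore factors uniquely through $\pi_I$ as $P(\phi)\circ\pi_I=\pi_I\circ\phi$ for a $*$-endomorphism $P(\phi)$ of $A/I$; this is the intended meaning of the shorthand $\pi_I\circ\phi=:P(\phi)$. (That $P(\phi)$ is in fact an automorphism will be a by-product of the next step, so the kernel computation need not be carried out separately.)

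The crux is to show $P(\phi)\in V_{A/I}$. Here I would push the approximating net through $\pi_I$: since $\pi_I$ is a unital $*$-homomorphism, $\pi_I(u_\lambda)\in U(A/I)$, and since $U_A$ is connected with $\pi_I$ continuous, the image $\pi_I(U_A)$ is a connected subset of $U(A/I)$ containing the identity, hence contained in $U_{A/I}$. Thus each $\Ad(\pi_I(u_\lambda))$ belongs to $\{\Ad(v):v\in U_{A/I}\}$. For $b=\pi_I(a)\in A/I$ one computes $\Ad(\pi_I(u_\lambda))(b)=\pi_I(\Ad(u_\lambda)(a))$ and $P(\phi)(b)=\pi_I(\phi(a))$; since $\Ad(u_\lambda)(a)\to\phi(a)$ in norm and $\pi_I$ is contractive, $\Ad(\pi_I(u_\lambda))(b)\to P(\phi)(b)$. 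As $\pi_I$ is surjective this holds for every $b\in A/I$, so $\Ad(\pi_I(u_\lambda))\to P(\phi)$ pointwise and therefore $P(\phi)\in V_{A/I}$.

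The remaining properties are routine. The formula $P(\Ad(u))=\Ad(\pi_I(u))$ follows from $\pi_I(uau^*)=\pi_I(u)\pi_I(a)\pi_I(u)^*$ and surjectivity of $\pi_I$; multiplicativity $P(\phi\circ\psi)=P(\phi)\circ P(\psi)$ follows by comparing both sides after precomposing with the surjection $\pi_I$; and continuity is immediate because for fixed $b=\pi_I(a)$ the map $\phi\mapsto P(\phi)(b)=\pi_I(\phi(a))$ is the composition of the (by definition continuous) evaluation $\phi\mapsto\phi(a)$ with the contraction $\pi_I$. The only step demanding genuine care is the membership $P(\phi)\in V_{A/I}$, where one must simultaneously track that the approximants descend to inner automorphisms implemented by the \emph{identity component} $U_{A/I}$ and that pointwise norm convergence survives application of $\pi_I$.
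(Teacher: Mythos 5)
Your proposal is correct and follows essentially the same route as the paper: establish $\phi[I]\subseteq I$ by passing the approximating net $\Ad(u_\lambda)$ through the closed ideal, then observe that $P(\phi)$ is the pointwise limit of $\Ad(\pi_I(u_\lambda))$, with the formula on inner automorphisms, multiplicativity, and continuity being routine. The only difference is that you spell out the (correct) detail that $\pi_I(U_A)\subseteq U_{A/I}$ via connectedness, which the paper leaves implicit.
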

\begin{proof}
Pick $\phi\in V_A$, an ideal $I$ and $x\in I$. Since $\phi$ is a pointwise limit of $(\mathrm{Ad}(u_i))_i$, where $(u_i)_i\subseteq U_A$, we have \[\phi(x)=\lim_i u_ixu_i^*\in I\] since $I$ is two-sided and closed. It follows that \[P(\phi)(x+I):=\phi(x)+I=\pi_I\circ \phi(x)\] is a well-defined automorphism of $A/I$. Let $u\in U_A$ and $x\in A$. By definition, we clearly have \[P(\mathrm{Ad}(u))(x+I)=\pi_I(uxu^*)=\pi_I(u)(x+I)\pi_I(u)^*=\mathrm{Ad}(\pi_I(u))(x+I).\] $P$ is obviously continuous, thus for any $\phi\in V_A$ we have $P(\phi)\in V_{A/I}$ since if $\phi$ is a limit of $(\mathrm{Ad}(u_i))_i$, then $P(\phi)$ is a limit of $(\mathrm{Ad}(\pi_I(u_i)))_i$ in $V_{A/I}$. It is clear that $P$ is a homomorphism.
\end{proof}
Let $A$ be a {\rm C}$^*$-algebra, and $\omega$ be a free ultrafilter on $\N$. 
Let $\ell^{\infty}(A)$ be the C$^*$-algebra of all bounded sequences of elements in $A$ with the norm $\|a\|=\sup_n\|a_n\|,\,a=(a_1,a_2,\dots)\in \ell^{\infty}(A)$. 
The set 
$$c_{\omega}(A)=\left \{a\in \ell^{\infty}(A)\,\middle|\, \lim_{n\to \omega}\|a_n\|=0\right \}$$
 is a closed two-sided ideal of $\ell^{\infty}(A)$. The quotient C$^*$-algebra $A_{\omega}=\ell^{\infty}(A)/c_{\omega}(A)$ is called the ultrapower of $A$ along $\omega$. $A$ is naturally identified with the C$^*$-subalgebra of $A_{\omega}$ by the diagonal embedding $A\ni a\mapsto (a,a,\dots)+c_{\omega}(A)\in A_{\omega}$. The relative commutant $A'\cap A_{\omega}$ is called the central sequence algebra of $A$. 
 When there is no danger of confusion, we use the abbreviation $(v_n)_{\omega}$ for an element in $A_{\omega}$ represented by $(v_n)_{n=1}^{\infty}\in \ell^{\infty}(A)$:
 \[(v_n)_{\omega}=(v_1,v_2,\dots)+c_{\omega}(A)\in A_{\omega}\]

\subsection{The proof of Theorem~\ref{thm:main}}\label{subsec separablecase}

We will use the next lemma, which is immediate from Kirchberg's surjectivity lemma, Proposition \ref{prop kirchberg lifting of cs}. 
\begin{lemma}\label{lem lifting of U_(F(A/I))}
Let $A$ be a separable unital {\rm C}$^*$-algebra, $I$ be a closed two-sided proper ideal of $A$. Then for each $v\in U_{(A/I)'\cap (A/I)_{\omega}}$, there exists $u\in U_{A'\cap A_{\omega}}$ such that $(\pi_I)_{\omega}(u)=v$ holds. 
\end{lemma}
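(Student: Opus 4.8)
The plan is to reduce everything to the surjectivity of the ultrapower map $(\pi_I)_\omega$ on central sequence algebras, and then to exploit the description of the identity component of a unitary group in terms of exponentials. First I would invoke Proposition~\ref{prop kirchberg lifting of cs}(iii) with $B=A$ embedded diagonally. Since $\pi_I$ is surjective, the diagonal copy $(\pi_I)_\omega(A)$ is precisely the diagonal copy of $A/I$ inside $(A/I)_\omega$, so the third term of the exact sequence there becomes $(A/I)'\cap(A/I)_\omega$. The upshot is the short exact sequence
\[0\longrightarrow A'\cap I_\omega \xrightarrow{\iota_\omega} A'\cap A_\omega \xrightarrow{(\pi_I)_\omega} (A/I)'\cap (A/I)_\omega \longrightarrow 0,\]
and in particular $(\pi_I)_\omega\colon A'\cap A_\omega\to (A/I)'\cap(A/I)_\omega$ is a surjective unital $*$-homomorphism.

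Next I would use the fact recalled in the introduction that the identity component of the unitary group of a unital C$^*$-algebra is algebraically generated by exponentials of self-adjoint elements. Applying this to $(A/I)'\cap(A/I)_\omega$, the given $v\in U_{(A/I)'\cap(A/I)_\omega}$ can be written as a finite product $v=\prod_{k=1}^n \exp(\ri h_k)$ with each $h_k=h_k^*\in (A/I)'\cap(A/I)_\omega$.

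The third step is the lifting. By surjectivity of $(\pi_I)_\omega$, choose any preimage of $h_k$ and replace it by its self-adjoint part; since $(\pi_I)_\omega$ is a $*$-homomorphism and $h_k$ is self-adjoint, this produces a self-adjoint $\tilde h_k\in A'\cap A_\omega$ with $(\pi_I)_\omega(\tilde h_k)=h_k$. Setting $u:=\prod_{k=1}^n\exp(\ri \tilde h_k)$ yields a unitary that, by construction, is a product of exponentials of self-adjoints in $A'\cap A_\omega$, hence lies in the identity component $U_{A'\cap A_\omega}$. Because $(\pi_I)_\omega$ intertwines the continuous functional calculus, $(\pi_I)_\omega(\exp(\ri \tilde h_k))=\exp(\ri h_k)$ for each $k$, so $(\pi_I)_\omega(u)=\prod_k\exp(\ri h_k)=v$, as required.

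The only real subtlety—and the step to watch—is guaranteeing that the lift lands in the connected component $U_{A'\cap A_\omega}$ rather than merely in the full unitary group: a naive unitary lift of $v$ itself need not be connected to the identity. Lifting at the level of the self-adjoint exponents $h_k$, rather than lifting $v$ directly, is exactly what circumvents this, and it works precisely because the surjectivity coming from Kirchberg's lemma lets us lift self-adjoints to self-adjoints. Everything else is the routine continuity of a $*$-homomorphism together with its compatibility with the exponential functional calculus.
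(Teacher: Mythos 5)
Your proof is correct and follows essentially the same route as the paper: apply Kirchberg's surjectivity lemma (Proposition~\ref{prop kirchberg lifting of cs}(iii) with $B=A$), write $v$ as a finite product of exponentials $e^{\ri b_k}$ of self-adjoint elements of $(A/I)'\cap (A/I)_\omega$, lift each $b_k$ and pass to the self-adjoint part of the lift, and take the product of the resulting exponentials. The observation you flag as the key subtlety — lifting the exponents rather than $v$ itself so as to land in the identity component — is precisely the point of the paper's argument as well.
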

\begin{proof}
Since $v\in U_{(A/I)'\cap (A/I)_{\omega}}$, there exist self-adjoint elements $b_1,\dots,b_n\in (A/I)'\cap (A/I)_{\omega}$ such that $v=e^{\ri b_1}\cdots e^{\ri b_n}$. By Proposition \ref{prop kirchberg lifting of cs}, there exists $a_1',\dots, a_n'\in A'\cap A_{\omega}$ such that $(\pi_I)_{\omega}(a_k')=b_k,\,k=1,\dots,n$. Then $a_k=\frac{1}{2}\{a_k'+(a_k')^*\},\,k=1,\dots,n$ are self-adjoint elements in $A'\cap A_{\omega}$, so that $u=e^{\ri a_1}\cdots e^{\ri a_n}\in U_{A'\cap A_{\omega}}$ satisfies $(\pi_I)_{\omega}(u)=v$. 
\end{proof}

\begin{lemma}\label{lem un close to vn in U_I}  Let $A$ be a unital {\rm C}$^*$-algebra, $I$ be a closed two-sided proper ideal of $A$. 
    If $(u_n)_{n=1}^{\infty}$ is a sequence in $U(A)$ such that ${\rm dist}(u_n,\tilde{I})\xrightarrow{n\to \infty}0$, 
    then there exists a sequence $(v_n)_{n=1}^{\infty}$ in $U(I)$ such that $\disp \lim_{n\to \infty}\|u_n-v_n\|=0$. 
\end{lemma}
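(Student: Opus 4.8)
The plan is to approximate each $u_n$ by an element of the C$^*$-subalgebra $\tilde I = I + \mathbb{C}1 \subseteq A$ and then correct that element to a genuine unitary of $\tilde I$ by functional calculus, exploiting the standard fact that an element close to a unitary is itself close to a unitary lying in any unital C$^*$-subalgebra containing it. First I would fix, for each $n$, an element $a_n \in \tilde I$ with $\varepsilon_n := \|u_n - a_n\| \to 0$; such $a_n$ exist because $\mathrm{dist}(u_n,\tilde I)\to 0$ and $\tilde I$ is norm-closed in $A$ (it is the sum of the closed ideal $I$ and the one-dimensional space $\mathbb{C}1$, hence a unital C$^*$-subalgebra with unit $1_A$). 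Note $\|a_n\|\le \|u_n\|+\varepsilon_n = 1+\varepsilon_n$, so $(a_n)_n$ is bounded.

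Next I would record that $a_n$ is asymptotically unitary. Using $u_n^*u_n = u_nu_n^* = 1$, the triangle inequality, and the bound on $\|a_n\|$, one gets $\|a_n^*a_n - 1\|\to 0$ and $\|a_na_n^* - 1\|\to 0$; concretely $\|a_n^*a_n - 1\| = \|a_n^*a_n - u_n^*u_n\| \le \|a_n^*\|\,\|a_n - u_n\| + \|a_n^* - u_n^*\|\,\|u_n\| \to 0$, and symmetrically for $a_na_n^*$. Hence for all large $n$ both $a_n^*a_n$ and $a_na_n^*$ lie within distance $<1$ of $1$ in the unital C$^*$-algebra $\tilde I$, so both are invertible there, and therefore $a_n$ itself is invertible in $\tilde I$.

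For such $n$ I would set $v_n := a_n(a_n^*a_n)^{-1/2}$, the unitary part of the polar decomposition of $a_n$. The one point that genuinely needs care is that $v_n$ lands in $\tilde I$ and not merely in $A$: since $a_n^*a_n$ is an invertible positive element of the unital C$^*$-algebra $\tilde I$, its inverse square root is computed by continuous functional calculus inside $\tilde I$, so $(a_n^*a_n)^{-1/2}\in\tilde I$ and thus $v_n \in U(\tilde I)$. That $v_n$ is unitary is immediate: $v_n^*v_n = (a_n^*a_n)^{-1/2}a_n^*a_n(a_n^*a_n)^{-1/2} = 1$, while invertibility of $a_n$ forces $v_n v_n^* = 1$ as well. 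For the finitely many small $n$ where invertibility may fail I would simply put $v_n = 1$, which does not affect the limit.

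Finally I would estimate $\|a_n - v_n\| = \|a_n((a_n^*a_n)^{-1/2} - 1)\| \le \|a_n\|\,\|(a_n^*a_n)^{-1/2} - 1\|$; since $a_n^*a_n\to 1$ and $t\mapsto t^{-1/2}$ is continuous near $1$, the second factor tends to $0$ while $\|a_n\|$ stays bounded, so $\|a_n - v_n\|\to 0$. Combining with $\|u_n - a_n\| = \varepsilon_n \to 0$ yields $\|u_n - v_n\| \le \varepsilon_n + \|a_n - v_n\| \to 0$, as required. I expect no serious obstacle; the single subtlety worth emphasizing is keeping the functional calculus within the unital subalgebra $\tilde I$, which is precisely what guarantees $v_n\in U(\tilde I)$ rather than only $v_n \in U(A)$.
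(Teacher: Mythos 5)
Your proposal is correct and follows essentially the same route as the paper: approximate $u_n$ by $a_n\in\tilde I$, observe $a_n^*a_n\to 1$ so $a_n$ is eventually invertible in $\tilde I$, take the unitary polar part $v_n=a_n|a_n|^{-1}\in U(\tilde I)$, and estimate $\|u_n-v_n\|\le\|u_n-a_n\|+\|a_n\|\,\||a_n|^{-1}-1\|$. Your additional check that $a_na_n^*\to 1$ as well (rather than inferring invertibility from $a_n^*a_n$ alone) is a slightly more careful justification of the invertibility step, but the argument is otherwise identical.
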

\begin{proof}
By assumption, there exists a sequence $(x_n)_{n=1}^{\infty}$ in $\widetilde{I}$ such that $\disp \lim_{n\to \infty}\|u_n-x_n\|=0$. Then also $\disp \lim_{n\to \infty}\|u_n^*-x_n^*\|=0$ and thus $\disp \lim_{n\to \infty}\|u_n^*u_n-x_n^*x_n\|=\lim_{n\to \infty}\|1-x_n^*x_n\|=0$. Then by functional calculus, we have also $\disp \lim_{n\to \infty}\|1-|x_n|\|=0$. In particular, $x_n\in {\rm GL}(\widetilde{I})$ eventually, and thus we may assume that all $x_n$ are invertible. Then it also holds that $\||x_n|^{-1}-1\|\xrightarrow{n\to \infty}0$. Then $v_n=x_n|x_n|^{-1}\in U(I)$, and 
    \eqa{
        \|u_n-v_n\|&\le \|u_n-x_n\|+\|x_n-x_n|x_n|^{-1}\|\\
        &\le \|u_n-x_n\|+\|x_n\|\|1-|x_n|^{-1}\|\\
        &\xrightarrow{n\to \infty}0.
    }
\end{proof}
\begin{proof}[Proof of Theorem~\ref{thm:main}]
    First, we show that ${\rm Ker}(P)=V_I$. It is clear that $V_I\subset {\rm Ker}(P)$. Let $\phi\in {\rm Ker}(P)$. Then because $\phi\in V_A$ and $A$ is separable, there exists a sequence $(u_n)_{n=1}^{\infty}$ in $U_A$ such that $\disp \phi=\lim_{n\to \infty}{\rm Ad}(u_n)$ in ${\rm Aut}(A)$. Let $v_n=\pi_I(u_n)\in U_{A/I},\,n\in \N$. 
    Then by $P(\phi)={\rm id}$, for all $b\in A/I$, we have $\disp \lim_{n\to \infty}\|v_nbv_n^*-b\|=\lim_{n\to \infty}\|v_nb-bv_n\|=0$. 
    
    Let 
    $$v=(v_1,v_2,\dots)+c_{\omega}(A/I)\in (A/I)_{\omega}.$$
    Then $v\in U\big((A/I)'\cap (A/I)_{\omega}\big)$.\medskip
    
    \noindent{\bf Claim.} There exists $\tilde u\in U(A'\cap A_\omega)$ such that $(\pi_I)_\omega(\tilde u)=u$.\medskip

    If $U((A/I)'\cap (A/I)_{\omega})$ is connected, then  by Lemma~\ref{lem lifting of U_(F(A/I))},
    there exists a unitary $\tilde u\in U_{A'\cap A_{\omega}}$ such that $(\pi_I)_\omega(\tilde u)=v$. So assume now that $A'\cap A_{\omega}$ has stable rank one. First, applying again Proposition~\ref{prop kirchberg lifting of cs} (iii), there exists a contraction (not necessarily a unitary) $t\in A'\cap A_{\omega}$ such that $(\pi_I)_{\omega}(t)=v$, and second, by Proposition~\ref{prop unitarypd}, there exists a unitary $\tilde u\in A'\cap A_{\omega}$ such that $t=\tilde u|t|$. Let $(t_n)_{n=1}^{\infty}$ be a sequence of contractions in $A$ representing $t$ and let $(\tilde u_n)_{n=1}^\infty$ be a sequence of unitaries in $A$ representing $\tilde u$. Also let $u=(u_n)_\omega\in A_\omega$.
    
    We have $\pi_{\omega}((t^*t)^{\frac{1}{2}})=(v^*v)^{\frac{1}{2}}=1$, so $|t|-1\in I_{\omega}$. Therefore $\lim_{n\to \omega}\|\pi_I(1-|t_n|)\|=0$ and so we obtain \eqa{
        \|(\pi_I)_{\omega}(\tilde{u})-(\pi_I)_{\omega}(t)\|&=\lim_{n\to \omega}\|\pi_I(\tilde{u}_n)\pi_I(1-|t_n|)\|\\
        &=\lim_{n\to \omega}\|\pi_I(1-|t_n|)\|\\
        &=0.
    }
    
    This shows that $(\pi_I)_{\omega}(\tilde{u})=(\pi_I)_{\omega}(t)=v=(\pi_I)_{\omega}(u)$. This finishes the proof of the claim.$\qed$

    Thus, $u\tilde{u}^*-1\in {\rm Ker}((\pi_I)_{\omega})=I_{\omega}$ by Proposition \ref{prop kirchberg lifting of cs}(i), i.e., we have 
    $${\rm dist}(u\tilde{u}^*,\tilde{I}_{\omega})=\lim_{n\to \omega}{\rm dist}(u_n\tilde{u}_n^*,\tilde{I})=0.$$
    Since $A$ is separable and $\tilde{u}\in A'\cap A_{\omega}$, we may choose a subsequence $n_1<n_2<\cdots $ such that 
    \begin{align}
        \lim_{k\to \infty}\|\tilde{u}_{n_k}a-a\tilde{u}_{n_k}\|=0,\,\,\,a\in A,\\
        \lim_{k\to \infty}{\rm dist}(u_{n_k}\tilde{u}_{n_k}^*,\tilde{I})=0. 
    \end{align}
    By Lemma \ref{lem un close to vn in U_I}, we may find a sequence $(w_k)_{k=1}^{\infty}$ in $U_I$ such that 
    \[\lim_{k\to \infty}\|u_{n_k}\tilde{u}_{n_k}^*-w_k\|=0.\]
    We now claim that 
    \[\lim_{k\to \infty}w_kaw_k^*=\lim_{k\to \infty}u_{n_k}au_{n_k}^*(=\phi(a)),\,\,\,a\in A,\]
    that is, $\disp \phi=\lim_{k\to \infty}{\rm Ad}(w_k)\in V_I$ holds, which would finish the proof. 
    To show the claim, let $a\in A$. Then 
    \eqa{
        w_kaw_k^*&=(w_k-u_{n_k}\tilde{u}_{n_k}^*)aw_k^*+u_{n_k}\tilde{u}_{n_k}^*a(w_k^*-\tilde{u}_{n_k}u_{n_k}^*)\\
        &\hspace{1.0cm}+u_{n_k}(\tilde{u}_{n_k}^*a\tilde{u}_{n_k}-a)u_{n_k}^*+u_{n_k}au_{n_k}^*,    
    }
    whence 
    \eqa{
        \|w_kaw_k^*-u_{n_k}au_{n_k}^*\|&\le \|w_k-u_{n_k}\tilde{u}_{n_k}^*\|\|a\|+\|a\|\,\|w_k^*-\tilde{u}_{n_k}u_{n_k}^*\|\\
        &\hspace{1.0cm}+\|a\tilde{u}_{n_k}-\tilde{u}_{n_k}a\|\\
        &\xrightarrow{n\to \infty}0.
    }
    Next, we show that $P\colon V_A\to V_{A/I}$ is a surjection. Let $G=V_A/V_I$ be the quotient group with the quotient topology. Let $q_I\colon V_A\to G$ be the canonical quotient map. 
    Since ${\rm Ker}(P)=V_I$, $P$ induces a continuous injective homomorphism $\tilde{P}\colon G\to V_{A/I}$. Since $A$ is separable, $V_A$ and $G$ are Polish groups. 
    We show that $\tilde{P}$ is an open mapping, whence a topological isomorphism onto its image. This would imply that the image $P(V_A)$ is a dense Polish subgroup of the Polish group $V_{A/I}$, thus $P(V_A)=\tilde{P}(G)=V_{A/I}$. 
        
    To show that $\tilde{P}$ is open, we show that whenever a sequence $(\phi_k)_{k=1}^{\infty}$ in $V_A$ satisfies $P(\phi_k)\to {\rm id}$ in $V_{A/I}$, then there exists a subsequence $k_1<k_2<\cdots$ and $(\chi_{i})_{i=1}^{\infty}$ in $V_I$ such that 
    $\chi_{i}\circ \phi_{k_i}\to {\rm id}$ in $V_A$. Note that the last condition is equivalent to $[\phi_{k_i}]\to {\rm id}$ in $G$, where $[\phi]$ is the image of $\phi$ in $G$. 
    It is well-known that in a metric space $X$, if a sequence $(x_n)_{n=1}^{\infty}$ has the property that there exists $x\in X$ such that for any subsequence $(x_{n_k})_{k=1}^{\infty}$ there is a further subsequence $(x_{n_{k_i}})_{i=1}^{\infty}$ converging to $x$, then $(x_n)_{n=1}^{\infty}$ itself converges to $x$. Thus the above result would imply that whenever a sequence $(\phi_n)_{n=1}^{\infty}$ in $V_A$ satisfies $P(\phi_n)\to {\rm id}$ in $V_{A/I}$, then $[\phi_n]\to {\rm id}$ in $G$, which is exactly the openness of $\tilde{P}$. The proof is very similar to (but slightly more involved than) the proof of ${\rm Ker}(P)=V_I$. 
    
    Fix two free ultrafilters $\omega,\omega'\in \beta \N\setminus \N$. 
    Recall that on a partially ordered set $\N^2$ with the partial ordering given by $(n,m)\le (n',m')$ if and only if $n\le n'$ and $m\le m'$, the cofinal ultrafilter $\omega'\otimes \omega$ is defined by 
    \[\omega'\otimes \omega=\{A\subset \N^2\mid \{k\mid \{n\mid (k,n)\in A\}\in \omega\}\in\omega'\}\] 
    Then we have for any doubly indexed sequence in a compact Hausdorff space the following ultralimit formula
    \[\lim_{(k,n)\to \omega'\otimes \omega}x_{k,n}=\lim_{k\to \omega'}\lim_{n\to \omega}x_{k,n}.\]
    Consequently, for any C$^*$-algebra $B$, we have a natural identification 
    \[B_{\omega'\otimes \omega}=(B_{\omega})_{\omega'}.\]
    Thus the $\omega'$-ultrapower of the $\omega$-ultrapower of a C$^*$-algebra $B$ is nothing but the ultrapower of $B$ in another cofinal ultrafilter. Thus, Proposition \ref{prop kirchberg lifting of cs} can be applied to iterated ultrapower of separable C$^*$-algebras.

    For each $k\in \N$, by $\phi_k\in V_A$, there exists a sequence $(u_n^{(k)})_{n=1}^{\infty}$ in $U_A$ such that $\disp \phi_k=\lim_{n\to \infty}{\rm Ad}(u_n^{(k)})$ in $V_A$. Let $u^{(k)}=(u_n^{(k)})_{\omega}\in A_{\omega}$, $v_n^{(k)}=\pi_I(u_n^{(k)})\in A/I$ and $v^{(k)}=(v_n^{(k)})_{\omega}\in (A/I)_{\omega}$. Then for each $b\in A/I$, we have 
    \eqa{
        \|v^{(k)}b(v^{(k)})^*-b\|_{(A/I)_{\omega}}&=\lim_{n\to \omega}\|v_n^{(k)}b(v_n^{(k)})^*-b\|_{A/I}\\
        &=\|P(\phi_k)(b)-b\|_{A/I}\\
        &\xrightarrow{k\to \infty}0.
    }
    In particular, we have a unitary
    $$v=(v^{(k)})_{\omega'}\in (A/I)'\cap ((A/I)_{\omega})_{\omega'}=(A/I)'\cap (A/I)_{\omega'\otimes \omega}.$$
    By the same argument as in the proof of ${\rm Ker}(P)\subset V_I$, using either the stable rank one assumption on $A'\cap A_\omega$ or the connectedness of $U\big((A/I)'\cap (A/I)_\omega\big)$, and Proposition~\ref{prop kirchberg lifting of cs}(iii), we may find a doubly indexed sequence $(\tilde{u}_n^{(k)})_{n,k=1}^{\infty}$ in $U(A)$ such that 
    $\tilde{u}=(\tilde{u}_n^{(k)})_{\omega'\otimes \omega}\in A'\cap A_{\omega'\otimes \omega}$, such that $(\pi_I)_{\omega'\otimes \omega}(\tilde{u})=v.$ Let $u=(u_n^{(k)})_{\omega'\otimes \omega}\in A_{\omega'\otimes \omega}$. Then by Proposition \ref{prop kirchberg lifting of cs}(i), we have 
    \[u\tilde{u}^*-1\in {\rm Ker}((\pi_I)_{\omega'\otimes \omega})=I_{\omega'\otimes \omega},\]
    whence 
    \[{\rm dist}(u\tilde{u}^*,\tilde{I}_{\omega'\otimes \omega})=\lim_{(k,n)\to \omega'\otimes \omega}{\rm dist}(u_n^{(k)}(\tilde{u}_n^{(k)})^*,\tilde{I})=0.\]
    Then by Lemma \ref{lem un close to vn in U_I}, there exists $(w_n^{(k)})_{n,k=1}^{\infty}$ in $U(
I)=U_I$ such that 
    \[\lim_{k\to \omega'}\lim_{n\to \omega}\|u_n^{(k)}(\tilde{u}_n^{(k)})^*-w_n^{(k)}\|=0.\]
    Let $\{a_1,a_2,\dots\}$ be a countable dense subset of the closed unit ball of $A$. Then for each $i\in \N$ there exists $k_i$ such that $k_1<k_2<\cdots$, and the following two conditions hold.
    \eqa{
        \lim_{n\to \omega}\|\tilde{u}_n^{(k_i)}a_{\ell}-a_{\ell}\tilde{u}_{n}^{(k_i)}\|&<\frac{1}{i},\,(\ell=1,\dots,i)\\
        \lim_{n\to \omega}\|u^{(k_i)}_n(\tilde{u}^{(k_i)}_n)^*-w_n^{(k_i)}\|&<\frac{1}{i}.
    }
    Then choose $n_1<n_2<\cdots$ such that 
    \eqa{
        \|\tilde{u}_{n_i}^{(k_i)}a_{\ell}-a_{\ell}\tilde{u}_{n_i}^{(k_i)}\|&<\frac{1}{i},\,(\ell=1,\dots,i),\\
        \|u^{(k_i)}_{n_i}(\tilde{u}^{(k_i)}_{n_i})^*-w_{n_i}^{(k_i)}\|&<\frac{1}{i},\\
        \|\phi_{k_i}(a_{\ell})-u_{n_i}^{(k_i)}a_{\ell}(u_{n_i}^{(k_i)})^*\|&<\frac{1}{i}\,\,(\ell=1,\dots,i).
    }
    Let $\chi_i={\rm Ad}((w_{n_i}^{(k_i)})^*)\in V_I,\,i\in \N$.\\ \\
    \textbf{Claim.} The following equality holds. 
    \[\lim_{i\to \infty}\chi_i\circ \phi_{k_i}={\rm id}\,\,{\rm in}\,V_A.\]
    By the density of $\{a_1,a_2,\dots\}$ in the closed unit ball of $A$, it suffices to show that $\disp \lim_{i\to \infty}\chi_i\circ \phi_{k_i}(a_{\ell})=a_{\ell}$ for all $\ell\in \N$. Fix $\ell\in \N$. Then for each $i\in \N$, 
    \eqa{
        \|\chi_{i}\circ \phi_{k_i}(a_{\ell})-a_{\ell}\|&\le \|(w_{n_i}^{(k_i)})^*\big(\phi_{k_i}(a_{\ell})-u_{n_i}^{(k_i)}a_{\ell}(u_{n_i}^{(k_i)})^*\big)w_{n_i}^{(k_i)}\|\\
        &\hspace{0.5cm}+\|(w_{n_i}^{(k_i)})^*u_{n_i}^{(k_i)}a_{\ell}(u_{n_i}^{(k_i)})^*w_{n_i}^{(k_i)}-a_{\ell}\|\\
        &=\|\phi_{k_i}(a_{\ell})-u_{n_i}^{(k_i)}a_{\ell}(u_{n_i}^{(k_i)})^*\|+\|u_{n_i}^{(k_i)}a_{\ell}(u_{n_i}^{(k_i)})^*-w_{n_i}^{(k_i)}a_{\ell}(w_{n_i}^{(k_i)})^*\|\\
        &<\tfrac{1}{i}+\|u_{n_i}^{(k_i)}(\tilde{u}_{n_i}^{(k_i)})^*\big(\tilde{u}_{n_i}^{(k_i)}a_{\ell}-a_{\ell}\tilde{u}_{n_i}^{(k_i)}\big)(u_{n_i}^{(k_i)})^*\|\\
        &\hspace{0.5cm}+\|(u_{n_i}^{(k_i)}(\tilde{u}_{n_i}^{(k_i)})^*-w_{n_i}^{(k_i)})a_{\ell}\tilde{u}_{n_i}^{(k_i)}(u_{n_i}^{(k_i)})^*\|\\
        &\hspace{1.0cm}+\|w_{n_i}^{(k_i)}a_{\ell}(\tilde{u}_{n_i}^{(k_i)}(u_{n_i}^{(k_i)})^*-(w_{n_i}^{(k_i)})^*)\|\\
        &<\tfrac{1}{i}+\|\tilde{u}_{n_i}^{(k_i)}a_{\ell}-a_{\ell}\tilde{u}_{n_i}^{(k_i)}\|+\|u_{n_i}^{(k_i)}(\tilde{u}_{n_i}^{(k_i)})^*-w_{n_i}^{(k_i)}\|+\|\tilde{u}_{n_i}^{(k_i)}(u_{n_i}^{(k_i)})^*-(w_{n_i}^{(k_i)})^*)\|\\
        &<\tfrac{4}{i}\xrightarrow{i\to \infty}0.
    }
    Therefore, the Claim follows. This shows that $\tilde{P}$ is open, whence $P(V_A)=V_{A/I}$. 
\end{proof}

\subsection{The general case}\label{subsec nonseparable}
As we have seen, Theorem \ref{thm:AFconsequence} follows from Theorem \ref{thm:main}. We now partially extend Theorem \ref{thm:AFconsequence} to (not necessarily separable) locally AF algebras.


To ease the notation, we follow the convention in Farah--Katsura \cite{MR2673735FarahKatsura1}.  For a $C^*$-algebra $A$, $x\in A$, $S\subset A$ and $\varepsilon>0$, we write $x\in_{\varepsilon}S$ if there exists $y\in S$ such that $\|x-y\|<\varepsilon$ holds, and for $S_1,S_2\subset A$, we write $S_1\subseteq_{\varepsilon}S_2$ if $x\in_{\varepsilon}S_2$ for every $x\in S_1$. 
Recall that a $C^*$-algebra $A$ is called locally AF (or locally finite-dimensional, LF), if for every finite subset $F$ of $A$ and $\varepsilon>0$, there exists a finite-dimensional subalgebra $D$ of $A$ such that $F\subseteq_{\varepsilon}D$. By \cite[Theorem 2.2]{BratteliMR0312282}, any separable locally AF algebra is an AF algebra and vice versa. (actually, by \cite[Theorem 1.5]{MR2673735FarahKatsura1}, any locally AF $C^*$-algebra of character density $\le \aleph_1$ is AF and for any $\kappa>\aleph_1$, there is a locally AF algebra of character density $\kappa$ which is not AF).

\begin{theorem}\label{thm: main locally AF}
Let $A$ be a unital locally AF algebra, $I\subseteq A$ be a proper closed two-sided ideal. Then the canonical quotient map $P:V_A\to V_{A/I}$ satisfies that 
\begin{itemize}\item[(1)]\label{it main Loc AF-1} $\mathrm{Ker}(P)=V_I$ and consequently, $P$ naturally factorizes as $\tilde P\circ Q$, where $Q:V_A\to V_A/V_I$ is the quotient map; 
\item[(2)]\label{it main Loc AF-2} $\tilde P:V_A/V_I\to V_{A/I}$ is a topological isomorphism onto image.
\end{itemize}
\end{theorem}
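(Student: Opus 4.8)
The plan is to reduce both assertions to the separable case, Theorem~\ref{thm:AFconsequence}: assertion (1) by a reflection (Löwenheim--Skolem) argument, and assertion (2) by a uniform local lifting that exploits the finite-dimensional structure directly. The essential difficulty compared with the separable case is that when $A$ is non-separable the point-norm topology on $\Aut(A)$ is not metrizable, so a general $\phi\in V_A$ need not be a sequential limit of inner automorphisms and the iterated-ultrapower argument of Theorem~\ref{thm:main} is unavailable; in particular the openness of $\tilde P$ must be proved with a modulus that is uniform in $\phi$.

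For (1), recall $V_I\subseteq\mathrm{Ker}(P)$ always, and that $\phi\in V_I$ means precisely that for every finite $F\subseteq A$ and every $\varepsilon>0$ there is $w\in U_I$ with $\max_{a\in F}\|\phi(a)-\Ad(w)(a)\|<\varepsilon$. So fix $\phi\in\mathrm{Ker}(P)$, a finite $F$, and $\varepsilon>0$. I would build, by a back-and-forth closure, a separable subalgebra $B\subseteq A$ with $F\cup\{1_A\}\subseteq B$ that is (i) $\phi$-invariant, $\phi(B)=B$; (ii) locally AF, obtained by absorbing at each step a finite-dimensional subalgebra $\delta$-containing the finitely many new generators, hence AF since $B$ is separable (Bratteli); and (iii) saturated for approximate innerness, namely for a countable dense set of demands we absorb the self-adjoint $h_l\in A$ for which $\prod_l e^{\ri h_l}\in U_A$ witnesses $\phi\in V_A$, so that $\phi|_B\in V_B$. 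Then $J:=B\cap I$ is a proper closed ideal of $B$, and since $\phi(b)-b\in I$ for every $b\in B$ (as $\phi\in\mathrm{Ker}(P)$) while $B$ is $\phi$-invariant, we get $\phi(b)-b\in J$, i.e. $\phi|_B\in\mathrm{Ker}(P_B)$ for $P_B\colon V_B\to V_{B/J}$. Theorem~\ref{thm:AFconsequence} applied to $(B,J)$ gives $\phi|_B\in V_{J}$, so there is $w\in U_{J}\subseteq U_I$ with the required estimate on $F$; as $(F,\varepsilon)$ were arbitrary, $\phi\in V_I$.

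For (2), injectivity of $\tilde P$ is immediate from (1) and continuity from Lemma~\ref{lem:approxInnquotient}, so I only need $\tilde P$ open onto its image. Since $q_I$ is an open surjection this is equivalent to the uniform statement: for every finite $F\subseteq A$ and $\varepsilon>0$ there are a finite $F'\subseteq A$ and $\varepsilon'>0$ such that for \emph{all} $\phi\in V_A$, if $\max_{a\in F'}\|\pi_I(\phi(a)-a)\|<\varepsilon'$ then some $\chi\in V_I$ satisfies $\max_{a\in F}\|\chi(\phi(a))-a\|<\varepsilon$. To prove it, use local finiteness to fix a finite-dimensional unital $D\subseteq A$ with $F\subseteq_\delta D$, let $\{e_{pq}\}$ be its matrix units, take $F'=\{e_{pq}\}$, and tune $\varepsilon',\delta$ at the end. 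Given such a $\phi$, choose $u\in U_A$ with $\Ad(u)$ $\eta$-close to $\phi$ on $D\cup F$; the hypothesis then forces $\pi_I(u)$ to almost commute with $\pi_I(D)$. The core step is to correct $u$ by a unitary from $I$ so that it commutes with $D$ up to $\varepsilon/2$: using the conditional expectation $E_{D'}(\cdot)=\int_{U(D)}w(\cdot)w^*\,dw$ onto $D'\cap A$, which is compatible with $\pi_I$, one gets the relative-commutant surjection $\pi_I(D'\cap A)=\pi_I(D)'\cap(A/I)$; a finite-dimensional perturbation produces a unitary $\bar v_0\in\pi_I(D)'\cap(A/I)$ with $\|\bar v_0-\pi_I(u)\|$ small, which lifts exponentially to a unitary $\hat v\in U(D'\cap A)$ with $\pi_I(\hat v)\approx\pi_I(u)$. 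Then $\hat v u^*$ is a unitary close to $1$ modulo $I$, so a quantitative form of Lemma~\ref{lem un close to vn in U_I} furnishes $w\in U_I$ with $w\approx\hat v u^*$, whence $wu\approx\hat v\in D'\cap A$ almost commutes with $D$. Setting $\chi=\Ad(w)\in V_I$ and combining the estimates through $F\subseteq_\delta D$ yields $\max_{a\in F}\|\chi(\phi(a))-a\|<\varepsilon$ once $\eta,\delta,\varepsilon'$ are small; crucially every modulus depends only on $D$ and $\varepsilon$, not on $\phi$, which is exactly the uniformity required.

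The main obstacle is the unitary lifting inside the core step of (2): I must know that $\bar v_0$ lifts to a \emph{unitary} of $D'\cap A$ and that the resulting correction lands in $U_I$ rather than in another component. This rests on the structural facts that quotients of unital locally AF algebras are locally AF, that unitizations of ideals in locally AF algebras have connected unitary group, and that the relative commutant of a finite-dimensional unital subalgebra of a locally AF algebra is again locally AF—so that the relevant $K_1$-groups vanish and both the exponential liftings and the application of Lemma~\ref{lem un close to vn in U_I} are legitimate. I would isolate these as preliminary lemmas, the first two being routine and the relative-commutant one proved via the identification of $D'\cap A$ with a direct sum of corners $pAp$ of $A$. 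Granting them, the finite-dimensional argument delivers the uniform modulus and hence the openness of $\tilde P$, completing the proof that $\tilde P$ is a topological isomorphism onto its image; surjectivity of $P$ is neither claimed nor expected here, consistently with Remark~\ref{rem P onto undecidable}.
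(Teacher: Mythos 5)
Your proposal is correct, and while part (1) follows essentially the paper's own route, part (2) is a genuinely different argument. For (1), your back-and-forth construction of a separable, $\phi$-invariant AF subalgebra $B\supseteq F$ with $\phi|_B\in V_B$, followed by Theorem~\ref{thm:AFconsequence} applied to $(B,B\cap I)$ and the finite-set approximation, is precisely Proposition~\ref{prop: downwardLS} combined with the paper's proof of Theorem~\ref{thm: main locally AF}(1). For (2), the paper proves openness of $\tilde P$ by a net argument: it reduces to $\phi_\lambda=\Ad(u_\lambda)$, reindexes over the directed set of pairs $(F,\varepsilon)$, assembles by careful bookkeeping a separable AF subalgebra capturing a cofinal subsequence, and then reruns the iterated-ultrapower machinery of Theorem~\ref{thm:main} inside it (Kirchberg's Proposition~\ref{prop kirchberg lifting of cs}, Proposition~\ref{prop F(AF) has connected unitary group}, Lemma~\ref{lem un close to vn in U_I}). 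You instead isolate the correct uniform reformulation of openness (valid because $\ker P=V_I$ and $q_I$ is open) and prove it directly: the Haar-average conditional expectation of Lemma~\ref{lem stability EB} gives the surjection $\pi_I(D'\cap A)=\pi_I(D)'\cap (A/I)$ for finite-dimensional unital $D$, which serves as an elementary, quantitative substitute for Kirchberg's lemma, and the rest is polar decomposition plus a quantitative form of Lemma~\ref{lem un close to vn in U_I}. This buys a proof of (2) with no ultrapowers and no nets, uniform moduli depending only on $D$ and $\varepsilon$, and it would even yield a softer proof of the openness part of Theorem~\ref{thm:AFconsequence}. The cost is the cluster of structural lemmas you rightly flag as needing isolation: that quotients, ideals, corners, and relative commutants of finite-dimensional unital subalgebras of locally AF algebras are again locally AF, hence have connected unitary groups where unital. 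These are standard perturbation arguments, but they are not optional decoration --- the connectedness of $U\bigl(\pi_I(D)'\cap(A/I)\bigr)$ is exactly what lets $\bar v_0$ lift exponentially to $U(D'\cap A)$, and the connectedness of $U(\tilde I)$ is what places the correcting unitary $w$ in $U_I$ so that $\chi=\Ad(w)\in V_I$ --- so they must be written out in full for the argument to stand. Your closing observation that surjectivity of $P$ is not claimed is consistent with Remark~\ref{rem P onto undecidable} and with the paper.
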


In order to reduce the argument to the separable setting, we will need the following result.

\begin{proposition}\label{prop: downwardLS}
Let $A$ be a unital (not necessarily separable) locally AF algebra, $I$ be a closed proper two-sided ideal of $A$, and let $\phi\in V_A$. Then there exists an increasing net $(A_j)_{j\in J}$ of unital separable {\rm C}$^*$ subalgebras such that $A=\bigcup_{j\in J}A_j$ and for each $j\in J$,  the following conditions hold. 
\begin{list}{}{}
\item[{\rm (i)}] 
$\phi(A_j)=A_j$ and $\phi|_{A_j}\in V_{A_j}$. 
\item[{\rm (ii)}] $A_j$ is an AF algebra. 

\end{list}

\end{proposition}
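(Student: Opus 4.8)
The plan is to prove Proposition~\ref{prop: downwardLS} by a Löwenheim–Skolem / closing-off argument. I would let $J$ be the family of all unital separable C$^*$-subalgebras $B\subseteq A$ (with $1\in B$) that are AF and satisfy $\phi(B)=B$ together with $\phi|_B\in V_B$, partially ordered by inclusion. The whole statement then reduces to a single \emph{closing-off lemma}: every separable subset $S\subseteq A$ is contained in some $B\in J$. Granting this, $J$ is directed (given $B_1,B_2\in J$, apply the lemma to the separable set $B_1\cup B_2$) and $\bigcup_{B\in J}B=A$ (apply it to each singleton $\{a\}$, $a\in A$), so $(B)_{B\in J}$ is the desired increasing net, and conditions (i), (ii) hold by construction.

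For the closing-off lemma, I would fix a separable $S\subseteq A$ and build an increasing chain of countable sets $X_0\subseteq X_1\subseteq\cdots$ with $S\cup\{1\}\subseteq X_0$. Passing from $X_m$ to $X_{m+1}$, I discharge, for every finite $F\subseteq X_m$ and every $k\in\N$ (countably many tasks), the following three kinds of closure demands. First, to make the union a dense $*$-subalgebra, throw in a countable dense subset of the $*$-algebra generated by $X_m$ over $\mathbb{Q}+\ri\mathbb{Q}$. Second, since $A$ is locally AF, choose a finite-dimensional subalgebra $D_{F,k}\subseteq A$ with $F\subseteq_{1/k}D_{F,k}$ and add a finite linear basis of $D_{F,k}$. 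Third, using $\phi\in V_A$ and $\phi^{-1}\in V_A$, choose unitaries $u_{F,k},u'_{F,k}\in U_A$ with $\|\Ad(u_{F,k})(a)-\phi(a)\|<1/k$ and $\|\Ad(u'_{F,k})(a)-\phi^{-1}(a)\|<1/k$ for all $a\in F$; writing each such unitary as a finite product of exponentials of self-adjoint elements of $A$ (possible since $U_A$ is algebraically generated by such exponentials), add all the occurring self-adjoint ``logarithms'' to $X_{m+1}$. Setting $B$ to be the C$^*$-subalgebra generated by $\bigcup_m X_m$, the first demand guarantees that $\bigcup_m X_m$ is dense in $B$, and $B$ is separable and unital.

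It remains to check $B\in J$. Local AF-ness of $B$ follows from the second demand: any finite $F'\subseteq B$ is approximated within $\varepsilon$ by a finite $F\subseteq\bigcup_m X_m$ and then by some $D_{F,k}\subseteq B$ with $1/k$ small; since $B$ is separable, Bratteli's theorem \cite[Theorem 2.2]{BratteliMR0312282} upgrades locally AF to AF, giving (ii). The third demand gives, for each $a$ in the dense set $\bigcup_m X_m$, unitaries $u_{\{a\},k}\in B$ with $\Ad(u_{\{a\},k})(a)=u_{\{a\},k}\,a\,u_{\{a\},k}^{*}\in B$ converging to $\phi(a)$, so $\phi(a)\in B$; by continuity $\phi(B)\subseteq B$, and symmetrically (via the $u'_{F,k}$) $\phi^{-1}(B)\subseteq B$, whence $\phi(B)=B$. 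Finally, each $u_{F,k}$ lies in the identity component $U_B$, being a product of exponentials of self-adjoint elements of $B$, and a standard $\varepsilon/3$-estimate (using that $\Ad(u_{F,k})$ and $\phi$ are isometric) shows that $\phi|_B$ is a pointwise limit of $\Ad(w)$ with $w\in U_B$, i.e.\ $\phi|_B\in V_B$, giving (i).

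The main obstacle is bookkeeping rather than any single hard estimate, and it is sharpened by non-separability: since $A$ need not be separable, $\phi$ is only a \emph{net}-limit of inner automorphisms, so there is no fixed sequence of implementing unitaries to reuse. The point of the third demand is precisely to manufacture, during the countable construction, a sequence of unitaries adapted to the separable piece $B$ being built, while simultaneously arranging (through the exponential decomposition) that they lie in the identity component, so as to witness membership in $V_B$ rather than merely in $V(B)$. Interleaving this with the finite-dimensional approximants that secure AF-ness is the only delicate part of the argument.
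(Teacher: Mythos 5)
Your proof is correct and follows essentially the same Löwenheim--Skolem closing-off strategy as the paper, which merely organizes the same closure demands into two separate lemmas (one producing $\phi$-invariant separable subalgebras with $\phi|_B\in V_B$ by adjoining the orbit $\{\phi^k(d)\}_{k\in\Z}$ and the self-adjoint logarithms of the approximating unitaries, one producing separable AF subalgebras via finite-dimensional approximants) and then alternates them, whereas you interleave all demands in a single recursion and derive $\phi(B)=B$ from the $\phi$- and $\phi^{-1}$-approximants rather than by adjoining the orbit directly. Both variants are sound and yield the same directed, cofinal family.
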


The proof is a combination of the next two results.

\begin{lemma}\label{lem increasing net of inv sep subalgs}
    Let $A$ be a (not necessarily separable) unital {\rm C}$^*$-algebra, $I$ be a proper closed two-sided ideal of $A$. Let $\phi\in V_A$. 
    Then there exists an increasing net of unital separable {\rm C}$^*$-subalgebras of $A$ such that $\phi(A_j)=A_j,\,\phi|_{A_j}\in V_{A_j}$ for every $j\in J$, and $A=\bigcup_{j\in J}A_j$. The net is cofinal in the net of all separable unital {\rm C}$^*$-subalgebras of $A$. 
\end{lemma}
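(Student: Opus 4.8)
The plan is to run a \emph{separable closing-off} (downward Löwenheim--Skolem) argument. The essential building block is the following: starting from an arbitrary countable subset $S_0\subseteq A$, I will construct a separable unital {\rm C}$^*$-subalgebra $A_{S_0}\supseteq S_0$ that is $\phi$-invariant and on which $\phi$ restricts to an element of $V_{A_{S_0}}$. The one nonformal ingredient is the well-known fact that $U_A$, the identity component of $U(A)$, is algebraically generated by exponentials of self-adjoint elements; this lets me witness the relation $\phi\in V_A$ by unitaries whose exponential factors can be absorbed into the subalgebra, thereby keeping the witnessing unitaries inside the \emph{identity component} $U_{A_{S_0}}$ rather than merely inside $U(A_{S_0})$.

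Concretely, I would build recursively an increasing sequence of countable sets $S_0\subseteq S_1\subseteq\cdots$. Given $S_k$, let $D_k$ be a countable dense subset of the separable unital {\rm C}$^*$-subalgebra generated by $S_k$, chosen so that $D_k\subseteq D_{k+1}$. To form $S_{k+1}$ I adjoin to $S_k$: (a) the elements $\phi(x)$ and $\phi^{-1}(x)$ for every $x\in D_k$, which will force invariance; and (b) for every finite subset $F\subseteq D_k$ and every $n\in\N$, a unitary $u_{F,n}\in U_A$ with $\max_{a\in F}\|\phi(a)-u_{F,n}au_{F,n}^*\|<1/n$ (available since $\phi\in V_A$), together with a finite family of self-adjoint elements $h_1^{(F,n)},\dots,h_m^{(F,n)}\in A$ for which $u_{F,n}=e^{\ri h_1^{(F,n)}}\cdots e^{\ri h_m^{(F,n)}}$. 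Since the finite subsets of the countable set $D_k$ form a countable family, each $S_{k+1}$ is again countable. Finally I set $A_{S_0}=\overline{\bigcup_k\langle S_k\rangle}$, where $\langle\,\cdot\,\rangle$ denotes the generated unital {\rm C}$^*$-subalgebra; this is a separable unital {\rm C}$^*$-subalgebra of $A$ containing $S_0$.

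Verifying the three properties is then routine. Invariance $\phi(A_{S_0})=A_{S_0}$ follows because $\phi$ and $\phi^{-1}$ map the dense set $\bigcup_k D_k$ into $A_{S_0}$, so by continuity $\phi(A_{S_0})\subseteq A_{S_0}$ and $\phi^{-1}(A_{S_0})\subseteq A_{S_0}$, giving equality. For $\phi|_{A_{S_0}}\in V_{A_{S_0}}$, fix a finite $F\subseteq A_{S_0}$ and $\varepsilon>0$; approximating the finitely many elements of $F$ within $\varepsilon/3$ by elements of the dense set $\bigcup_k D_k$ places these approximants in a single $D_k$, yielding a finite $F'\subseteq D_k$ with $F\subseteq_{\varepsilon/3}F'$. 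Taking $n$ with $1/n<\varepsilon/3$, the unitary $u=u_{F',n}$ lies in $A_{S_0}$ and, being a product of exponentials of self-adjoint elements of $A_{S_0}$, lies in the identity component $U_{A_{S_0}}$; a three-term estimate using that $\phi$ and $\Ad(u)$ are isometric then gives $\|\phi(a)-uau^*\|<\varepsilon$ for all $a\in F$. Hence $\phi|_{A_{S_0}}$ lies in $\overline{\{\Ad(u):u\in U_{A_{S_0}}\}}=V_{A_{S_0}}$.

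To finish, let $J$ be the set of all separable unital {\rm C}$^*$-subalgebras $B\subseteq A$ satisfying $\phi(B)=B$ and $\phi|_B\in V_B$, partially ordered by inclusion, and set $A_B=B$. Applying the construction to a countable dense subset of an arbitrary separable unital subalgebra shows that $J$ is cofinal in the directed family of all separable unital {\rm C}$^*$-subalgebras of $A$; applying it to a dense subset of $C^*(B_1\cup B_2)$ shows that $J$ is itself upward directed. Thus $(A_B)_{B\in J}$ is an increasing net with the stated cofinality, and since by cofinality every separable unital subalgebra, in particular $C^*(a,1)$ for each $a\in A$, is contained in some member of $J$, we have $A=\bigcup_{B\in J}A_B$. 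The main point to watch throughout is condition (i): one must keep the witnessing unitaries in the identity component $U_{A_j}$, which is precisely why the self-adjoint exponential factors are adjoined at each stage, and one must witness approximate innerness on finite subsets rather than single elements.
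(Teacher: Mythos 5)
Your proof is correct and follows essentially the same closing-off (downward L\"owenheim--Skolem) strategy as the paper: witness $\phi$ on growing finite sets by unitaries written as products of exponentials of self-adjoint elements, adjoin those self-adjoint exponents (so the witnessing unitaries land in the identity component of the subalgebra) together with the $\phi^{\pm k}$-images of a countable dense set, and deduce cofinality by starting the construction from an arbitrary separable unital subalgebra. The only difference is bookkeeping --- you iterate the closure $\omega$ times and re-approximate on all finite subsets of each dense set, whereas the paper runs a single diagonal recursion in which $u_n$ is additionally required to approximately implement $\phi$ on the previously chosen exponents $b_{m,t}$; your variant, by explicitly adjoining the $\phi^{-1}$-images, also makes the surjectivity $\phi(A_j)=A_j$ particularly transparent.
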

\begin{proof}
For $\phi\in V_A$, let $\mathcal{A}_\phi$ be the set of all separable subalgebras $B$ of $A$ such that $\phi[B]=B$ and $\phi\upharpoonright B\in V_B$. It suffices to show that $\mathcal{A}_\phi$ is a cofinal subnet in the net of all separable unital ${\rm C}^*$-subalgebras of $A$.

Let $B_0\subseteq A$ be a separable subalgebra and let $D=\{d_i\mid i\in\N\}\subseteq B_0$ be a countable dense subset. We may recursively choose a sequence $u_n\in U_A$ of the form 
    \begin{equation}
        u_n=e^{\ri\, b_{n,1}}\cdots e^{\ri\, b_{n,t_n}}\label{eq u_nj}
    \end{equation}
    where $b_{n,t}\,(1\le t\le t_n)$ are self-adjoint elements in $A$, such that $\|\phi(x)-u_n x u_n^*\|<\frac{1}{n}$ for all $x$ in the set
    \[\{\phi^k(d_i)\mid 0\le |k|\le n, i\leq n\}\cup \{b_{m,t}\mid 1\le m\le n-1, 1\le t\le t_m\}.\]

    Then define $B\supseteq B_0$ to be the unital C$^*$-algebra generated by the set 
    \[\{\phi^k(d)\mid k\in \Z, d\in D\}\cup \{b_{n,t}\mid n\in \N,t\le t_n\}.\]
    Then $u_n\in U_B$, $B$ is a unital separable $\phi$-invariant C$^*$-subalgebra of $A$, and 
    \[\phi(x)=\lim_{n\to \infty}{\rm Ad}(u_n)(x),\,\,\,x\in B.\]
\end{proof}

We will also need the next lemma from \cite[Lemma 2.17]{MR2673735FarahKatsura1}, which is a downward L\" owenheim--Skolem-type theorem for metric structures. We include the proof for completeness.

\begin{lemma}\cite{MR2673735FarahKatsura1}\label{lem locAF axiomatizable}
Let $A$ be a unital locally AF algebra, $B_0$ be a unital separable $C^*$-subalgebra of $A$. Then there exists a unital separable AF subalgebra $B$ of $A$ containing. $B_0$. 
\end{lemma}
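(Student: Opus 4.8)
The plan is to reduce to Bratteli's theorem \cite[Theorem 2.2]{BratteliMR0312282}, which asserts that every separable locally AF algebra is AF. Thus it suffices to construct a unital separable $C^*$-subalgebra $B$ of $A$ with $B_0 \subseteq B$ that is itself locally AF, since then $B$ is automatically AF. The only difficulty is that the finite-dimensional subalgebras witnessing local AF-ness of $A$ a priori live in the (possibly non-separable) algebra $A$, so the real task is to carry out a closing-off (downward L\"owenheim--Skolem) argument that traps enough of these witnesses inside a separable subalgebra.

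First I would build an increasing chain $B_0 \subseteq B_1 \subseteq B_2 \subseteq \cdots$ of unital separable $C^*$-subalgebras of $A$ as follows. Given $B_n$, fix a countable dense subset $S_n \subseteq B_n$. For every finite $F \subseteq S_n$ and every $k \in \N$, use the local AF property of $A$ to choose a finite-dimensional subalgebra $D_{F,k} \subseteq A$ with $F \subseteq_{1/k} D_{F,k}$. Let $B_{n+1}$ be the unital $C^*$-algebra generated by $B_n$ together with the countable family $\{ D_{F,k} \mid F \subseteq S_n \text{ finite},\ k \in \N \}$; since this family is countable and each $D_{F,k}$ is finite-dimensional, $B_{n+1}$ is again separable. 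Set $B = \overline{\bigcup_n B_n}$, a unital separable subalgebra of $A$ containing $B_0$.

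It then remains to verify that $B$ is locally AF. Given a finite set $F \subseteq B$ and $\varepsilon > 0$, density of $\bigcup_n S_n$ in $B$ lets me find some $n$ and a finite $F' \subseteq S_n$ each of whose elements lies within $\varepsilon/2$ of the corresponding element of $F$; choosing $k$ with $1/k < \varepsilon/2$, the subalgebra $D_{F',k} \subseteq B_{n+1} \subseteq B$ is finite-dimensional and satisfies $F \subseteq_{\varepsilon} D_{F',k}$. Hence $B$ is locally AF, and Bratteli's theorem finishes the proof. I expect no serious obstacle beyond the standard reflection bookkeeping: one must index the witnesses by finite subsets of a countable dense set and by all tolerances $1/k$ so that, in the limit, an arbitrary finite subset of $B$ (not merely of some fixed $B_n$) is approximated, while unitality is maintained automatically once the unit of $A$ is placed in $B_0$.
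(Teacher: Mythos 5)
Your proposal is correct and follows essentially the same closing-off argument as the paper: build an increasing chain of separable subalgebras, each containing finite-dimensional witnesses approximating a countable dense subset of the previous one, take the closure of the union, verify local AF-ness by an $\varepsilon/2$ argument, and invoke Bratteli's theorem that separable locally AF algebras are AF. The only difference is bookkeeping (you index witnesses by all finite subsets of a dense set and all tolerances $1/k$, while the paper uses a single enumeration with $\{x_1,\dots,x_n\}\subseteq_{1/n}D_n$), which is immaterial.
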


\begin{proof}
We first show that for any unital separable ${\rm C}^*$-subalgebra $C$ of $A$, there exist a unital separable ${\rm C}^*$-subalgebra $\widetilde{C}$ of $A$ containing $C$ such that for every $\varepsilon>0$ and every finite set $\mathcal{F}\subset C$, there exists a finite-dimensional unital $C^*$-subalgebra $D$ of $\widetilde{C}$ such that $\mathcal{F}\subseteq_{\varepsilon}D$. Let $S=\{x_1,x_2,\dots\}$ be a countable dense subset of $C$. For each $n\in \N$, use the local AF property of $A$ to find a finite-dimensional $C^*$-subalgebra $D_n$ of $A$ such that $\{x_1,\dots,x_n\}\subseteq_{1/n}D_n$. Let $\widetilde{C}=C^*(S,D_1,D_2,\dots)$. Then $\widetilde{C}$ has the required property. 

Then by induction, we construct an increasing sequence $B_0\subseteq B_1\subseteq B_2\subseteq \dots \subseteq A$ of unital separable {\rm C}$^*$-subalgebras of $A$ such that for each $n\ge 0$, finite set $\mathcal{F}\subset B_n$ and for every $\varepsilon>0$, there exists a finite-dimensional $C^*$-subalgebra $D$ of $B_{n+1}$ such that $\mathcal{F}\subseteq_{\varepsilon}D$. Let $B=\overline{\bigcup_{n=0}^{\infty}B_n}$, which is a unital separable $C^*$-subalgebra of $A$. We show that $B$ is locally AF, hence an AF algebra. 
Let $\varepsilon>0$ and $\mathcal{F}\subset B$ be a finite set. Then there exists $n\in \N$ such that $\mathcal{F}\subseteq_{\varepsilon/2}\tilde{\mathcal{F}}$ for some finite set $\tilde{\mathcal{F}}\subset B_n$. 
There exists a finite-dimensional ${\rm C}^*$-subalgebra $D$ of $B_{n+1}\subset B$ such that $\tilde{\mathcal{F}}\subseteq_{\varepsilon/2}D$. 
Therefore $\mathcal{F}\subseteq_{\varepsilon}D$. This shows that $B$ is locally AF. 
\end{proof}

\begin{proof}[Proof of Proposition \ref{prop: downwardLS}]

Let $B_0$ be a unital separable C$^*$-subalgebra of $A$. By a repeated use of Lemmas~\ref{lem increasing net of inv sep subalgs} and~\ref{lem locAF axiomatizable}, there exist unital separable C$^*$-subalgebras $B_0\subset A_1\subset B_1\subset A_2\subset B_2\subset \cdots$ such that for every $k\in \N$, the following conditions hold:
(i) $\phi(A_k)=A_k$, $\phi|_{A_k}\in V_{A_k}$; (ii) $B_k$ is an AF algebra.

Then it is straightforward to show that the unital separable $C^*$-subalgebra $B$ of $A$ defined by 
$$B:=\overline{\bigcup_{n\in \N}A_n}=\overline{\bigcup_{n\in \N}B_n}$$
satisfies $B_0\subset B$ and the following two conditions hold: (i) $\phi(B)=B,\,\phi|_B\in V_B$;  (ii) $B$ is an AF algebra.  

This shows that the net $\mathcal{A}'_{\phi}$ of all unital separable C$^*$-subalgebras of $A$ satisfying the conditions (i), (ii), in the statement of Proposition \ref{prop: downwardLS} is cofinal in the increasing net of unital separable C$^*$-subalgebras of $A$. This finishes the proof. 
\end{proof}

\begin{proof}[Proof of Theorem~\ref{thm: main locally AF} (i)]
    It is enough to show that ${\rm Ker}(P)\subset V_I$. Let $\phi\in {\rm Ker}(P)$. By Proposition \ref{prop: downwardLS}, there exists an increasing net $(A_j)_{j\in J}$ of globally $\phi$-invariant, separable unital AF subalgebras of $A$ such that the conditions $A=\bigcup_{j\in J}A_j,\, \phi(A_j)=A_j$ and $\phi|_{A_j}\in V_{A_j}$ hold.
   
    Note that because $\phi\in V_A$, we have $\phi(I)=I$ as well. Note also that for each $j\in J$, $I_j=A_j\cap I$ (resp. $A_j/I_j$) is a closed two-sided ideal in (resp. a quotient of) a separable unital AF algebra, whence it is AF as well. In particular, both $U((A_j/I_j)'\cap (A_j/I_j)_{\omega})$ and $U(\widetilde{I}_j)$ are connected (here we used Proposition \ref{prop F(AF) has connected unitary group}).
    Let $a\in A$. Then $a\in A_j$ for some $j\in J$. Then $\phi(a)-a\in I$, and by $A_j/(A_j\cap I)\cong (A_j+I)/I$ (see e.g. \cite[p. 85]{Davidson-book}), we have $0={\rm dist}(\phi(a)-a,I)={\rm dist}(\phi(a)-a,A_j\cap I)$. That is, we have $\phi(a)+I_j=a+I_j$. If we denote by $P_j$ the quotient map $V_{A_j}\to V_{A_j/I_j}$, then $\phi|_{A_j}\in {\rm Ker}(P_j)$. 
    By Theorem \ref{thm:AFconsequence}, we have $\phi|_{A_j}\in V_{I_j}$. 

    Finally, let $F\subset A$ be a nonempty finite subset, and $\varepsilon>0$. By $A=\bigcup_{j\in J}A_j$, there exists $j\in J$ such that $F\subset A_j$. 
    By $\phi|_{A_j}\in V_{I_j}$, there exists $u\in U_{I_j}$ such that 
    $$\max_{a\in F}\|\phi(a)-uau^*\|<\varepsilon.$$
    Since $F$ and $\varepsilon$ are arbitrary and $u\in U_I$, it follows that $\phi$ is in the closure of $\{{\rm Ad}(u)\mid u\in U_I\}$ in the point-norm topology. Thus $\phi\in V_I$ holds.
\end{proof}

In order to avoid any misunderstanding, we recall a definition of a subnet that we use.
\begin{definition}
A subnet of a net $(x_i)_{i\in I}$ in a topological space $X$ is a net of the form $(y_{\alpha})_{\alpha\in A}$, where there exists a map $h\colon A\to I$ with the property that for every $i\in I$, there exists $\alpha_0\in A$ such that $h(\alpha)\ge i$ for every $\alpha \ge \alpha_0$, and \[y_{\alpha}=x_{h(\alpha)},\,\alpha\in A.\]
This definition is more general than the subnet in the sense of Willard, where it is required that the reindexing map $h$ is order-preserving and has cofinal image in $I$. 
\end{definition}
The proof of the next lemma is standard, so we omit the proof. 
\begin{lemma}\label{lem subnet}
Let $X$ be a topological space, $(x_i)_{i\in I}$ be a net in $X$, and let $x\in X$. If every subnet of $(x_i)_{i\in I}$ has a further subnet converging to $x$, then $(x_i)_{i\in I}$ itself converges to $x$. 
\end{lemma}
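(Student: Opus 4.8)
The plan is to prove the contrapositive. Assume that $(x_i)_{i\in I}$ does \emph{not} converge to $x$. Unwinding the definition of net convergence, this yields an open neighbourhood $U$ of $x$ in which $(x_i)$ fails to be eventually contained; equivalently, the index set
\[
J:=\{\,i\in I : x_i\notin U\,\}
\]
is cofinal in $I$. The strategy is to build from $J$ a subnet of $(x_i)$ all of whose values avoid $U$, and then to contradict the hypothesis by noting that any limit of such a subnet would have to lie outside the open set $U$.

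First I would verify that $(x_i)_{i\in J}$, where $J$ carries the order inherited from $I$ and $h\colon J\to I$ is the inclusion map, is a subnet of $(x_i)_{i\in I}$ in the sense recalled in the subnet definition above. Indeed, given $i\in I$, cofinality of $J$ provides $\alpha_0\in J$ with $\alpha_0\ge i$, and then every $\alpha\ge\alpha_0$ in $J$ satisfies $h(\alpha)=\alpha\ge\alpha_0\ge i$, which is exactly the defining property required of the reindexing map. (This $h$ is moreover order-preserving with cofinal image, so $(x_i)_{i\in J}$ is a subnet even in the stricter sense of Willard, and no subtlety arising from the more general definition intervenes here.)

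Next, applying the hypothesis to this subnet produces a further subnet $(x_{h'(\beta)})_{\beta\in B}$, with $h'\colon B\to J$, converging to $x$. Since $h'(\beta)\in J$ for every $\beta\in B$, each term satisfies $x_{h'(\beta)}\in X\setminus U$, so this convergent net takes all of its values in the closed set $X\setminus U$. Because the set of limits of nets valued in a given set is precisely its closure, the limit $x$ must lie in $\overline{X\setminus U}=X\setminus U$, contradicting $x\in U$. Hence $(x_i)_{i\in I}$ converges to $x$, as claimed.

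I do not anticipate a genuine obstacle here: the argument is the standard net-theoretic analogue of the familiar subsequence criterion invoked earlier in the proof of Theorem~\ref{thm:main}. The only points demanding a little care are the bookkeeping in the second paragraph---confirming that the ``frequently outside $U$'' index set, equipped with the inherited order and inclusion map, meets the specific subnet axiom stated above---and the invocation of the closure-via-nets characterization to conclude that the limit remains in the closed complement $X\setminus U$. Both are immediate once the relevant definitions are spelled out, which is presumably why the authors regard the statement as standard.
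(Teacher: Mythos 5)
Your argument is correct and is exactly the standard contrapositive proof the authors had in mind; the paper itself omits the proof, declaring it standard. The only point you leave implicit is that the cofinal set $J$ is itself directed under the inherited order (given $j_1,j_2\in J$, pick $i\in I$ above both and then $j\in J$ above $i$), which is needed for $(x_i)_{i\in J}$ to be a net, but this is immediate and does not affect the validity of the proof.
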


\begin{proof}[Proof of Theorem~\ref{thm: main locally AF} (ii)]
    
It suffices to show that $\tilde P$ is a homeomorphism with its image.
Suppose $(\phi_{\lambda})_{\lambda\in \Lambda}$ is a net in $V_A$ such that $P(\phi_{\lambda})\to {\rm id}$ in $V_{A/I}$. 
We show that $[\phi_{\lambda}]\to [{\rm id}]$ in $V_A/V_I$. 
To this purpose let $([\phi_{\lambda'}])_{\lambda'\in \Lambda'}$ be a subnet of $([\phi_{\lambda}])_{\lambda\in \Lambda}$. If we show that it admits a further subnet convergent to $[{\rm id}]$, then we obtain $[\phi_{\lambda}]\to {\rm id}$ by Lemma \ref{lem subnet}. Thus, we may from the beginning assume that $\Lambda=\Lambda'$ to find such a convergent subnet. Moreover, since ${\rm Ad}\,U_A$ is dense in $V_A$, by passing to a further subnet, we may and do assume that for each $\lambda \in \Lambda$, there exist $u_{\lambda}\in U_A$ and $\tilde{\phi}_{\lambda}\in V_A$ such that $\phi_{\lambda}=\tilde{\phi}_{\lambda}\circ {\rm Ad}(u_{\lambda})$ and $\disp \lim_{\lambda}\tilde{\phi}_{\lambda}={\rm id}$ in $V_A$. Indeed, since ${\rm Ad}(U_A)$ is dense in $V_A$, for each $\lambda\in \Lambda$, the set $\{\phi_{\lambda}\circ {\rm Ad}(u)\mid u\in U_A\}$ is dense in $V_A$. Let $(W_i)_{i\in I}$ be an decreasing net of all open neighborhoods of {\rm id} in $V_A$ paritally ordered by reverse inclusion. Define a partial ordering on the set $\widetilde{\Lambda}=\Lambda\times I$ by 
\[(\lambda,i)\le (\lambda',i')\iff \lambda\le \lambda',\,\,i\le i'\]
for $\lambda,\lambda'\in \Lambda$ and $i,i'\in I$. Then for each $\tilde{\lambda}=(\lambda,i)\in \widetilde{\Lambda}$, there exists $u_{\tilde{\lambda}}\in U_A$ such that $\phi_{\lambda}\circ {\rm Ad}(u_{\tilde{\lambda}}^*)\in W_i$. 
Define $h\colon \widetilde{\Lambda}\ni (\lambda,i)\mapsto \lambda\in \Lambda$. Then 
$(\phi_{\tilde{\lambda}})_{\tilde{\lambda}\in \widetilde{\Lambda}}$ is a subnet of $(\phi_{\lambda})_{\lambda\in \Lambda}$ with reindexing map $h$, and $\lim_{\tilde{\lambda}}\phi_{\tilde{\lambda}}\circ {\rm Ad}(u_{\tilde{\lambda}}^*)={\rm id}$ in $V_A$. Thus $\tilde{\phi}_{\tilde{\lambda}}=\phi_{\lambda}\circ {\rm Ad}(u_{\tilde{\lambda}}^*)$ works.

Then in $V_A/V_I$, $[\phi_{\lambda}]=[\tilde{\phi}_{\lambda}][{\rm Ad}(u_{\lambda})]\to [{\rm id}]$ if and only if $[{\rm Ad}(u_{\lambda})]\to [{\rm id}]$. Thus, we may assume that $\phi_{\lambda}={\rm Ad}(u_{\lambda})\,(\lambda\in \Lambda)$. 
 
Consider the partially ordered set $K=\{(F,\varepsilon)\mid F\subset A,\,\sharp F<\infty,\,\varepsilon>0\}$ with the partial ordering given by 
\[(F_1,\varepsilon_1)\le (F_2,\varepsilon_2)\iff F_1\subset F_2,\,\,\varepsilon_1>\varepsilon_2.\]
By $P(\phi_{\lambda})\to {\rm id}$, for each $k=(F,\varepsilon)\in K$, there exists $\lambda_k\in \Lambda$ such that for every $\lambda\ge \lambda_k$ in $\Lambda$, the following inequality holds. 
\[{\rm dist}(u_{\lambda}au_{\lambda}^*-a,I)<\varepsilon,\,\,a\in F.\]

Then $([\phi_{\lambda_k}])_{k\in K}$ is a net  satisfying $P(\phi_{\lambda_k})\to {\rm id}$ in $V_{A/I}$. It may not be a subnet of $([\phi_{\lambda}])_{\lambda\in \Lambda}$ because $\{\lambda_k\mid k\in K\}$ may not be cofinal in $\Lambda$. However, we can always enlarge the index set $K$ to make it a subnet. More precisely, define $\tilde{K}=K\times \Lambda$ and define a partial ordering on it by  
\[(k_1,\lambda_1)\le (k_2,\lambda_2)\iff k_1\le k_2\text{\,\,and\,\,}\lambda_1\le \lambda_2\]
for $k_1,k_2\in K$ and $\lambda_1,\lambda_2\in \Lambda$. Since $\Lambda$ is cofinal, for each $\tilde{k}=(k,\lambda)\in \tilde{K}$ there exists $\lambda_{\tilde{k}}\ge \lambda_k,\lambda$. Then $([\phi_{\lambda_{\tilde{k}}}])_{\tilde{k}\in \tilde{K}}$ is a subnet of $([\phi_{\lambda}])_{\lambda\in \Lambda}$ and  we still have  $P(\phi_{\lambda_{\tilde{k}}})\to {\rm id}$ in $V_{A/I}$. With this in mind, to ease the notation, we shall treat $([\phi_{\lambda_k}])_{k\in K}$ as if it is a subnet of $([\phi_{\lambda}])_{\lambda\in \Lambda})$. 
Thus we may and do assume that $\Lambda=K$ (thus $\phi_{\lambda_k}$ will be denoted by $\phi_{k}$ to ease the notation). Moreover, for each $k\in K$ there exist self-adjoint elements $b_{k,1},\dots, b_{k,t_k}$ such that 
\[u_k=e^{\ri b_{k,1}}\cdots e^{\ri b_{k,t_k}},\,\,k\in K.\]
We then proceed as follows. 
We fix $k=(F,\varepsilon)\in K$. Let $B_0$ be the unital separable $C^*$-subalgebra of $A$ generated by $F$, and choose a countable dense subset $\{d_j\}_{k\in \N}$ of $B_0$. 
We then construct, by induction on $j\in \N$, a sequence $\kappa_0=k\le \kappa_1\le \kappa_2\le \dots$ in $K$ with self-adjoint elements $b_{\kappa_j,1},\dots, b_{\kappa_j,t_j}\in A$ satisfying $u_{\kappa_j}=e^{\ri b_{\kappa_j,1}}e^{\ri b_{\kappa_j,t_j}}$, such that for each $j\ge 1$, the inequality 
\[{\rm dist}(u_{\kappa_{j+1}}au_{\kappa_{j+1}}^*-a,I)<\tfrac{\varepsilon}{j}\]
holds for every $a$ in the finite set
\[\left \{\phi_{\kappa_j}^{\ell}(c)\,\middle|\, 0\le |\ell|\le j,\,c\in \{d_1,\dots,d_j\}\cup \{b_{\kappa_{j'},t}\mid 0\le j'\le j,1\le t\le t_{j'}\}\right \}.\]
Let $\tilde B_1$ be the unital separable $C^*$-subalgebra of $A$ generated by the countable set
\[\left \{\phi_{\kappa_j}^{\ell}(c)\,\middle|\, \ell \in \Z, j\in \N,\,c\in \{d_j\}_{j\in \N}\cup \{b_{\kappa_{j},t}\mid j\ge 0,\,1\le t\le t_{j}\}\right \}.\]

Then $u_{\kappa_j}\in U_{\tilde B_1}$ for every $j\ge 0$ and we have 
\[{\rm dist}(u_{\kappa_j}au_{\kappa_j}^*-a,I)={\rm dist}(u_{\kappa_j}au_{\kappa_j}^*-a,\tilde B_1\cap I)\xrightarrow{j\to \infty}0\]
for every $a\in \tilde B_1$. Here, we used the fact that $(\tilde B_1+I)/I\cong \tilde B_1/(\tilde B_1\cap I)$. Therefore, we obtain
\[{\rm Ad}(u_{\kappa_j})\to {\rm id}\,\,{\rm in\,} V_{\tilde B_1/(\tilde B_1\cap I)}.\]

Next by Lemma~\ref{lem locAF axiomatizable}, there exists a unital separable AF subalgebra $B_1$ of $A$ containing $\tilde B_1$. We now fix $k\leq k_1=(F_1,\varepsilon_1)$ and apply the same argument as for $B_0$ and $k$ to $B_1$ and $k_1$ to obtain a sequence $\kappa_0=k\leq \kappa_1=k_1\leq \kappa_2\leq \kappa_3\leq\ldots$ in $K$ and a unital separable $C^*$-subalgebra $\tilde B_2$ containing $B_1$ such that \[{\rm Ad}(u_{\kappa_j})\to {\rm id}\,\,{\rm in\,} V_{\tilde B_2/(\tilde B_2\cap I)}.\] We then again apply Lemma~\ref{lem locAF axiomatizable} to get a unital separable AF subalgebra $B_2$ of $A$ containing $\tilde B_2$ and we fix new $k_1\leq k_2=(F_2,\varepsilon_2)$. We continue in the same way to get an increasing sequence of unital separable subalgebras \[B_0\subseteq \tilde B_1\subseteq\ldots\subseteq \tilde B_n\subseteq B_n\subseteq\] and increasing sequence $k=k_0\leq k_1\leq\ldots \leq k_n=(F_n,\varepsilon_n)\leq\ldots$, where for each $n\in\N$, $B_n$ is AF, $\varepsilon_n\to 0$, and by a careful bookkeeping \[B:=\overline{\bigcup_{n\in\N} F_n}=\overline{\bigcup_{n\in\N} B_n} .\] Indeed, the equality $\overline{\bigcup_{n\in\N} F_n}=\overline{\bigcup_{n\in\N} B_n}$ can be guaranteed e.g. by choosing $F_n$, for each $n\in\N$, so that it contains $\{d_{k,i}\mid  k,i<n\}$, where for each $k\in\N$, $\{d_{k,i}\mid i\in\N\}$ is a countable dense set in $B_k$ which is fixed as soon as $B_k$ is defined.

It follows that $B$ is a separable unital AF algebra. Moreover, we have $u_{k_j}\in U_B$ for every $j\ge 0$ and
\[{\rm dist}(u_{k_j}au_{k_j}^*-a,I)={\rm dist}(u_{k_j}au_{k_j}^*-a,\tilde B\cap I)\xrightarrow{j\to \infty}0\]
for every $a\in \tilde B$.

Thus as above, we get \[{\rm Ad}(u_{k_j})\to {\rm id}\,\,{\rm in\,} V_{B/(B\cap I)}.\]

Since $B,B\cap I$ and $B/(B\cap I)$ are separable AF algebras, by Proposition \ref{prop F(AF) has connected unitary group}, we may apply the same argument in the proof of ${\rm Ker}(P)=V_I$ part of Theorem \ref{thm:main} to obtain a further subsequence $k_{j_1}\le k_{j_2}\le \dots $, unitaries $(\tilde{u}_{k_{j_i}})_{i\in \N}$ in $U_B$ and $(w_{k_{j_i}})_{i\in \N}$ in $U_{I\cap B}$ such that 
\[\lim_{i\to \infty}\|u_{k_{j_i}}\tilde{u}_{k_{j_i}}^*-w_{k_{j_i}}\|=0.\]
Then 
\[\lim_{i\to \infty}{\rm Ad}(u_{k_{j_i}})(b)=\lim_{i\to \infty}{\rm Ad}(w_{k_{j_i}})(b),\,\,\,b\in B.\]
This implies that for a given fixed $k=(F,\varepsilon)\in K$, there exists $k'\ge k$ ($k'=k_{j_i}$ for sufficiently large $i$) and $w_{k'}\in U_{I}$ such that 
\[\|u_{k'}au_{k'}^*-w_{k'}aw_{k'}^*\|<\varepsilon,\,\,\,a\in F.\]
Then we have $\|w_{k'}^*u_{k'}au^*_{k'}w_{k'}-a\|<\varepsilon$. Applying this to every $k\in K$ we obtain as a result a subnet $(\phi_{k'})_{k'\in K'}$ on a directed subset $K'\subset K$, for which we have 
\[[\phi_{k'}]=[{\rm Ad}(w_{k'}^*u_{k'})]\to [{\rm id}]\,{\rm in\,}V_A/V_I.\]
Thus, we have shown that any subnet of $([\phi_k])_{k\in K}$ contains a further subnet converging to id in $V_A/V_I$, whence $[\phi_k]\to [{\rm id}]$ in $V_A/V_I$. This shows that $V_A/V_I\cong P(V_A)$. 
\end{proof}

We recall that a topological group $G$ is called \emph{Raikov-complete} if it is complete with respect to the two-sided uniformity. That is, every Cauchy net (with respect to both left and right uniformities) converges; see \cite[Section 3.6]{ArTkbook} and \cite[Theorem 3.6.25]{ArTkbook}.

It is unclear to us whether $V_A/V_I$ is a Raikov-complete topological group (clearly both $V_A$ and $V_I$ are Raikov-complete). 
\begin{question}\label{question:Raikov}
Let $A$ be a (not necessarily separable) unital $C^*$-algebra, $I$ be a proper two-sided closed ideal of $A$. Is  $V_A/V_I$ Raikov-complete?
\end{question}
An affirmative answer would guarantee that the induced map $\tilde{P}\colon V_A/V_I\to V_{A/I}$ is onto, for any unital locally AF algebra $A$, whence a topological group isomorphism. Indeed, then the identity map from the dense Raikov-complete subgroup $\tilde P[V_A/V_I]\subseteq V_{A/I}$ to itself would extend to a continuous homomorphism from $V_{A/I}$ onto $P[V_A/V_I]$ by \cite[Proposition 3.6.12]{ArTkbook}, showing that $V_{A/I}=\tilde P[V_A/V_I]$.

\section{Another remark on tensor products of ucp maps}\label{sec Kirchberg central sequence}
Finally, we would like to visit yet another theorem by Kirchberg on the central sequence algebra and point out that it leads to an observation regarding the tensor product of unital completely positive (ucp) maps between von Neumann algebras. Recall that a separable purely infinite simple C$^*$-algebra is called a Kirchberg algebra. A remarkable property of Kirchberg algebras discovered by Kirchberg himself is that its central sequence algebra is simple. In fact, he showed the following important theorem (we only state the result for the unital case). 
\begin{theorem}[Kirchberg \cite{Kirchberg3draft,kirchbergAbelMR2265050,KirchbergPhillips2000MR1745197}]\label{thm cs pi simple}
Let $A$ be a unital Kichberg algebra and $\omega$ be a free ultrafilter on $\N$. Then the central sequence algebra $A'\cap A_{\omega}$ is simple and purely infinite. Conversely, if $A$ is a unital separable {\rm C}$^*$-algebra for which $A'\cap A_{\omega}$ is simple and non-trivial, then $A$ is simple, purely infinite and nuclear (thus it is a Kirchberg algebra).   \end{theorem}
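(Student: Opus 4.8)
The plan is to prove the two implications separately, using Kirchberg's $\mathcal{O}_{\infty}$-absorption theorem as the engine of the forward direction and Proposition~\ref{prop kirchberg lifting of cs} itself for the simplicity part of the converse. For the forward direction, assume $A$ is a unital Kirchberg algebra. The key structural input is the absorption isomorphism $A\cong A\otimes\mathcal{O}_{\infty}$, which, by pushing first-tensor copies of $\mathcal{O}_{\infty}$ out to infinity, produces a \emph{unital} $*$-homomorphism $\mathcal{O}_{\infty}\to A'\cap A_{\omega}$; in particular $A'\cap A_{\omega}$ contains an asymptotically central family of isometries with pairwise orthogonal ranges. I would use this together with the pointwise pure infiniteness of $A$.

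For simplicity I would reduce everything to the single assertion that $1\precsim c$ (Cuntz subequivalence, realized \emph{inside} $A'\cap A_{\omega}$) for every nonzero positive $c$. Granting this, if $J$ is a nonzero closed ideal of $A'\cap A_{\omega}$ and $0\neq c\in J$ is positive, then there are $r_k\in A'\cap A_{\omega}$ with $r_k^{*}cr_k\to 1$, forcing $1\in J$ and hence $J=A'\cap A_{\omega}$. To establish $1\precsim c$, represent $c$ by positive contractions $(c_n)$ with $\|c\|=1$ and pass to $d=(c-\tfrac12)_{+}$, which lies in the ideal generated by $c$ and is represented by $d_n=(c_n-\tfrac12)_{+}$ with $\lim_{n\to\omega}\|d_n\|=\tfrac12$. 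A spectral-projection argument inside the purely infinite simple algebra $A$ (using $e_n=\chi_{(1/8,\,1/2]}(d_n)\neq 0$ for $\omega$-most $n$, pure infiniteness to embed an isometry into the range of $e_n$, and the bounded inverse of $d_n$ on that range) produces \emph{norm-bounded} $x_n$ with $x_n^{*}d_nx_n=1$. The same construction, arranged to yield two mutually orthogonal central copies of $c$ dominated by $c$, shows that every nonzero positive element of $A'\cap A_{\omega}$ is properly infinite, which is exactly pure infiniteness.

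For the converse, assume $A$ is unital separable with $A'\cap A_{\omega}$ simple and non-trivial. That $A$ is \emph{simple} I would deduce directly from Proposition~\ref{prop kirchberg lifting of cs}: if $0\neq J\subsetneq A$ were a proper closed ideal, then $A'\cap J_{\omega}$ is a closed two-sided ideal of $A'\cap A_{\omega}$; it is proper because $1\notin J_{\omega}$ (the image of $1$ in $(A/J)_{\omega}$ is nonzero and $\ker(\pi_J)_{\omega}=J_{\omega}$ by part (i)), and it is nonzero because part (ii) applied with $B=A$ supplies a positive $e\in A'\cap J_{\omega}$ acting as a unit on the nonzero diagonal copy of $J$, so $e\neq 0$. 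This contradicts simplicity, so $A$ is simple. The non-triviality hypothesis rules out $A'\cap A_{\omega}=\C$ and, with simplicity, forces $A'\cap A_{\omega}$ to be infinite-dimensional and purely infinite; one then transfers pure infiniteness back to $A$, the point being that a tracial state or a finite-type piece of $A$ would manifest as an obstruction (a trace on, or a proper ideal of) the central sequence algebra. Finally, \emph{nuclearity} of $A$ is the genuinely deep ingredient, resting on Kirchberg's analysis linking simplicity of the central sequence algebra to $\mathcal{O}_{\infty}$-absorption and, through the Kirchberg--Phillips machinery, to nuclearity; here I would simply invoke those results.

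The main obstacle in the forward direction is \emph{not} the pointwise pure infiniteness of $A$ but the \emph{asymptotic centrality} of the witnesses: the partial isometries produced by pure infiniteness of $A$ carry no commutation relation with $A\subseteq A_{\omega}$, so the bounded $x_n$ above only give $1\precsim d$ in $A_{\omega}$, not in $A'\cap A_{\omega}$. It is precisely the asymptotically central $\mathcal{O}_{\infty}$-structure coming from $A\cong A\otimes\mathcal{O}_{\infty}$, combined with a reindexing/diagonal argument across $\omega$, that lets one carry out the pure-infiniteness construction \emph{relative} to the commutant and thereby assemble an element of $A'\cap A_{\omega}$. In the converse the serious obstacle is nuclearity, which genuinely needs Kirchberg's classification-theoretic input and cannot be extracted from Proposition~\ref{prop kirchberg lifting of cs} alone.
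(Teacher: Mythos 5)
First, a point of comparison: the paper does not prove this statement at all. It is quoted as a known theorem of Kirchberg, with the proofs explicitly deferred to \cite[Theorem 2.12]{kirchbergAbelMR2265050} and \cite[Proposition 3.4]{KirchbergPhillips2000MR1745197}. So there is no in-paper argument to match yours against; what can be assessed is whether your sketch would actually constitute a proof, and it does not yet.

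The most serious gap is the one you yourself flag in the forward direction and then do not close. Reducing simplicity and pure infiniteness of $A'\cap A_{\omega}$ to the relation $1\precsim c$ \emph{witnessed inside} $A'\cap A_{\omega}$ is the right reduction, and your non-central construction of bounded $x_n$ with $x_n^*d_nx_n=1$ is routine from pure infiniteness of $A$. But the entire content of Kirchberg's theorem is the passage from these non-central witnesses to central ones, and the sentence ``the asymptotically central $\mathcal{O}_{\infty}$-structure, combined with a reindexing/diagonal argument across $\omega$, lets one carry out the construction relative to the commutant'' is a statement of the goal, not an argument: a unital copy of $\mathcal{O}_{\infty}$ in $A'\cap A_{\omega}$ does not by itself conjugate your $x_n$ into the commutant, and the actual proof (in the cited references) requires a genuinely technical interpolation between the central isometries and the non-central witnesses. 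Note also that $A\cong A\otimes\mathcal{O}_{\infty}$ requires nuclearity, which the paper's informal recollection of ``Kirchberg algebra'' omits but clearly intends. In the converse, your derivation of simplicity of $A$ from Proposition~\ref{prop kirchberg lifting of cs} is correct and clean (parts (i) and (ii) with $B=A$ do exhibit $A'\cap J_{\omega}$ as a nonzero proper ideal of $A'\cap A_{\omega}$), but the next step is not: the claim that ``a tracial state on $A$ would manifest as an obstruction (a trace on, or a proper ideal of) the central sequence algebra'' proves nothing, since a simple unital C$^*$-algebra may well carry a trace; simplicity of $A'\cap A_{\omega}$ is not contradicted by the existence of a trace on it. Deducing pure infiniteness (and, as you correctly note, nuclearity) of $A$ from simplicity plus non-triviality of $A'\cap A_{\omega}$ is exactly the deep part of Kirchberg's theorem, and your sketch ultimately just invokes it. In short: the skeleton is reasonable and the simplicity-of-$A$ step is a genuine, correct use of Proposition~\ref{prop kirchberg lifting of cs}, but both crucial implications are left resting on the very results being proved.
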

See \cite[Theorem 2.12]{kirchbergAbelMR2265050}
and \cite[Proposition 3.4]{KirchbergPhillips2000MR1745197} for proofs. The theorem is used in the Kirchberg--Phillips' classification \cite{Kirchberg3draft,KirchbergPhillips2000MR1745197} of UCT Kirchberg algebras. 
Recall that a von Neumann algebra $M$ on a Hilbert space $H$ is called injective if there exists a norm one projection $E\colon \mathbb{B}(H)\to M$. This property was originally introduced and called the extension property by Hakeda--Tomiyama in \cite{HakedaTomiyamaMR222656}. By Connes' theorem \cite{ConnesMR454659}, the injectivity is equivalent to the hyperfiniteness. Thus, the tensor product $M_1\overline{\otimes}M_2$ of injective von Neumann algebras $M_i\subset \mathbb{B}(H_i)\,(i=1,2)$ is again injective. In fact it is possible, as was shown by Tomiyama \cite[Theorem 4]{Tomiyama1969MR246141} (as he remarks, the arguments were essentially present already in \cite[Lemma 2.3, Theorem 3.2]{HakedaTomiyamaMR222656}), that even without assuming the normality of $E_1$ nor $E_2$, the tensor product $E_1\odot E_2\colon \mathbb{B}(H_1)\odot \mathbb{B}(H_2)\to M_1\odot M_2$ (here, $\odot$ denotes the algebraic tensor product) extends to a norm one projection $E_1\otimes E_2\colon \mathbb{B}(H_1)\overline{\otimes}\mathbb{B}(H_2)\to M_1\overline{\otimes}M_2$. Then it was further generalized by Nagisa--Tomiyama \cite{NagisaTomiyamaMR620290} that for any (not necessarily normal) ucp maps $\theta_i\colon M_i\to N_i\,(i=1,2)$ between von Neumann algebras, the tensor product $\theta_1\odot \theta_2\colon M_1\odot M_2\to N_1\odot N_2$ extends to a ucp map $\theta \colon M_1\overline{\otimes}M_2\to N_1\overline{\otimes}N_2$.
Nevertheless, we show that even all von Neumann algebras involved are atomic and both of $\theta_i$ are $*$-homomorphisms, the extension $\theta$ (as a completely positive map) may fail to be a $*$-homomorphism (it is a $*$-homomorphism when restricted to $M_1\otimes_{\rm min}N_1$). This can be seen e.g., as follows, thanks to Kirchberg's above theorem.    
\begin{example}
Fix a free ultrafilter $\omega$ on $\mathbb{N}$. Let $\theta_{\omega}\colon \ell^{\infty}(\mathbb{N})\ni (a_n)_{n=1}^{\infty}\mapsto \lim_{n\to \omega}a_n\in \mathbb{C}$ be the associated character. We show that the completely positive map $\text{id}_{\mathbb{B}(\ell^2)}\odot \theta_{\omega}\colon \mathbb{B}(\ell^2(\mathbb{N}))\odot \ell^{\infty}(\mathbb{N})\to \mathbb{B}(\ell^2(\mathbb{N}))\otimes \mathbb{C}=\mathbb{B}(\ell^2(\mathbb{N}))$ cannot be extended to a $*$-homomorphism  $\mathbb{B}(\ell^2(\mathbb{N}))\overline{\otimes}\ell^{\infty}(\mathbb{N})\to \mathbb{B}(\ell^2(\mathbb{N}))$.
\end{example}
\begin{proof} Write $\ell^p(\N)=\ell^p$. 
Assume by contradiction that there exists a $*$-homomorphism
\[\Phi\colon \mathbb{B}(\ell^2)\overline{\otimes}\ell^{\infty}\to \mathbb{B}(\ell^2)\] extending $\text{id}_{\mathbb{B}(\ell^2)}\odot \theta_{\omega}$. We identify $\mathbb{B}(\ell^2)\overline{\otimes}\ell^{\infty}=\ell^{\infty}(\mathbb{N},\mathbb{B}(\ell^2))$, the C$^*$-algebra of all bounded sequences of operators in $\mathbb{B}(\ell^2)$. 
Let $\pi\colon \mathcal{O}_2\to \mathbb{B}(\ell^2)$ be an irreducible representation of the Cuntz algebra $\mathcal{O}_2$. Since $\mathcal{O}_2$ is simple, $\pi (\mathcal{O}_2)$ is $*$-isomorphic to $\mathcal{O}_2$. Let $c_{\omega}(\mathbb{B}(\ell^2))$ be the C$^*$-subalgebra of $\ell^{\infty}(\mathbb{N},\mathbb{B}(\ell^2))$ consisting of all bounded sequences $(x_n)_{n=1}^{\infty}$ in $\mathbb{B}(\ell^2)$ such that $\lim_{n\to \omega}\|x_n\|=0$. We first show that $c_{\omega}(\mathbb{B}(\ell^2))\subset \text{Ker}(\Phi)$. Let $x=(x_n)_{n=1}^{\infty}\in c_{\omega}(\mathbb{B}(\ell^2))$. Note that $x$ corresponds to $\sum_{n=1}^{\infty}x_n\otimes \delta_n\in \mathbb{B}(\ell^2)\overline{\otimes}\ell^{\infty}$, where $\delta_n\in \ell^{\infty}$ is the element given by $\delta_n(m)=\begin{cases}\ \ \ 1 & (m=n)\\
 \ \ \ 0 & (m\neq n)\end{cases}$. 
It follows that 
 \eqa{
\Phi(x)&=\Phi\left (\sum_{n=1}^{\infty}x_n\otimes \delta_n\right )=\Phi\left (\sum_{n=1}^{\infty}\frac{x_n}{\|x_n\|+\tfrac{1}{n}}\otimes \delta_n\cdot \sum_{n=1}^{\infty}1\otimes (\|x_n\|+\tfrac{1}{n})\delta_n\right )\\
&=\Phi\left (\sum_{n=1}^{\infty}\frac{x_n}{\|x_n\|+\tfrac{1}{n}}\otimes \delta_n\right )\Phi\left (\sum_{n=1}^{\infty}1\otimes (\|x_n\|+\tfrac{1}{n})\delta_n\right )\\
&=\Phi\left (\sum_{n=1}^{\infty}\frac{x_n}{\|x_n\|+\tfrac{1}{n}}\otimes \delta_n\right )\lim_{n\to \omega}(\|x_n\|+\tfrac{1}{n})\\
&=0. 
}
Therefore, $x\in \text{Ker}(\Phi)$ and $c_{\omega}(\mathbb{B}(\ell^2))\subset \text{Ker}(\Phi)$ holds.\\ 
Then there exists a $*$-homomorphism $\overline{\Phi}\colon \mathbb{B}(\ell^2)_{\omega}:=\dfrac{\ell^{\infty}(\mathbb{N},\mathbb{B}(\ell^2))}{c_{\omega}(\mathbb{B}(\ell^2))}\to \mathbb{B}(\ell^2)$ such that $\Phi=\overline{\Phi}\circ q_{\omega}$, where $q_{\omega}\colon \ell^{\infty}(\mathbb{N},\mathbb{B}(\ell^2))\to \mathbb{B}(\ell^2)_{\omega}$ is the canonical surjection. Note that $\pi(\mathcal{O}_2)_{\omega}:=\dfrac{\ell^{\infty}(\mathbb{N},\pi(\mathcal{O}_2))}{c_{\omega}(\pi(\mathcal{O}_2))}$ can be identified with a $*$-subalgebra of $\mathbb{B}(\ell^2)_{\omega}$. Then, regarding $\pi(\mathcal{O}_2)\subset \pi(\mathcal{O}_2)_{\omega}$ as usual by diagonal embedding, we define $\Psi:=\overline{\Phi}|_{\pi(\mathcal{O}_2)'\cap \pi(\mathcal{O}_2)_{\omega}}\colon \pi(\mathcal{O}_2)'\cap \pi(\mathcal{O}_2)_{\omega}\to \mathbb{B}(\ell^2)$. Note that for $a\in \pi(\mathcal{O}_2)$ and $x\in \pi(\mathcal{O}_2)'\cap \pi(\mathcal{O}_2)_{\omega}$, we have (use $\overline{\Phi}|_{\pi(\mathcal{O}_2)}=\text{id}_{\pi(\mathcal{O}_2)}$)
\eqa{
\overline{\Phi}(ax)&=\overline{\Phi}(a)\overline{\Phi}(x)=a\Psi (x),\\
\overline{\Phi}(xa)&=\overline{\Phi}(x)\overline{\Phi}(a)=\Psi (x)a,
}
whence $a\Psi(x)=\Psi(x)a$ by $ax=xa$. Since $a\in \pi(\mathcal{O}_2)$ is arbitrary and $\pi$ is irreducible, we get $\Psi(x)\in \pi(\mathcal{O}_2)'\cap \mathbb{B}(\ell^2)=\mathbb{C}$. This shows that $\Psi\colon \pi(\mathcal{O}_2)'\cap \pi(\mathcal{O}_2)_{\omega}\to \mathbb{C}$ is a nonzero character. However, $\pi(\mathcal{O}_2)\cong \mathcal{O}_2$ is a Kirchberg algebra, so that by Theorem \ref{thm cs pi simple}, the central sequence algebra $\pi(\mathcal{O}_2)'\cap \pi(\mathcal{O}_2)_{\omega}$ is purely infinite and simple, a contradiction. 
\end{proof}

\appendix 
\section{Proof of Kirchberg's lemma (Proposition \ref{prop kirchberg lifting of cs})}
Before the proof, recall that a positive element $h$ in a (not necessarily unital) {\rm C}$^*$-algebra $A$ is called strictly positive, if $\overline{hAh}=A$ (equivalently, if $\overline{hA}=A$). $A$ admits a strictly positive element if and only if $A$ is $\sigma$-unital, meaning that $A$ admits a countable approximate unit. In particular, any separable C$^*$-algebra admits a strictly positive element. 
\begin{proof}
    (i) 
It is clear that $\iota_{\omega}$ is injective, ${\rm Im}(\iota_{\omega})(J_{\omega})\subset {\rm Ker}((\pi_J)_{\omega})$ and $(\pi_J)_{\omega}$ is onto. Therefore it suffices to show that ${\rm Ker}((\pi_J)_{\omega})\subset {\rm Im}(\iota_{\omega})$. Let $a=(a_1,a_2,\dots)+c_{\omega}(A)$ be in the kernel of $(\pi_J)_{\omega}$. Then $\disp \lim_{n\to \omega}\|\pi_J(a_n)\|=0$. 
Choose for each $n\in \N$ an element $a_n'\in A$ such that $\|a_n'\|=\|\pi_J(a_n)\|$ and $\pi_J(a_n')=\pi_J(a_n)$. Then $(a_n')_{n=1}^{\infty}\in c_{\omega}(A)$ and $c_n\colon =a_n-a_n'\in J$ for every $n\in \N$. Thus $c=(c_1,c_2,\dots)+c_{\omega}(J)$ satisfies $\iota_{\omega}(c)=a$.\\
(ii)\, Assume first that $B\cap J_{\omega}\neq \{0\}$. Let $s$ be a strictly positive contraction in the separable C$^*$-algebra $B\cap J_{\omega}$. Let $t=(t_n)_{n=1}^{\infty}\in \ell^{\infty}(J)_+$ be a seuence such that $s=t+c_{\omega}(J)$. Select a sequence $c_1,c_2,\dots \in B$ which is dense in the closed unit ball of $B$. Let $d_{1,n},d_{2,n},\dots\in A$ be contractions such that 
\[c_n=d_n+c_{\omega}(A),\,d_n=(d_{1,n},d_{2,n},\dots)\in \ell^{\infty}(A),\,\,n\in \N.\]
By considering an approximate identity in $J$ which is quasi-central for $A$, we may find for each $k\in \N$ a positive contraction $f_k\in J$ such that 
\begin{align}
    \|f_kt_k-t_k\|<\frac{1}{k},\\
    \max_{1\le n\le k}\|f_kd_{k,n}-d_{k,n}f_k\|<\frac{1}{k}.
\end{align}
Set $f=(f_1,f_2,\dots)\in \ell^{\infty}(J)$ and 
\[e=f+c_{\omega}(J)=f+c_{\omega}(A),\]
where we view $J_{\omega}\subset A_{\omega}$ via $\iota_{\omega}$. 
$e$ is a positive contraction in $J_{\omega}$. Also $es=s$ and $eb=be$ hold for every $b\in B$. Since $s\in (B\cap J_{\omega})_+$ is strictly positive, we have $\overline{s(B\cap J_{\omega})s}=B\cap J_{\omega}$. Therefore for every $b\in B\cap J_{\omega}$, we have $eb=be=b$.\\
If $B\cap J_{\omega}=\{0\}$, then by the same argument using quasi-central approximate units, we may find a positive contraction $e\in B'\cap J_{\omega}$, and this $e$ does the job.\\
(iii) It is clear that $B'\cap J_{\omega}={\rm Ker}((\pi_J)_{\omega}|_{B'\cap A_{\omega}})$ and $(\pi_J)_{\omega}(B'\cap A_{\omega})\subset (\pi_J)_{\omega}(B)'\cap (A/J)_{\omega}$. Thus if we show that $(\pi_J)_{\omega}(B)'\cap (A/J)_{\omega}\subset (\pi_J)_{\omega}(B'\cap A_{\omega})$ then we would get the exactness of 
\[0\longrightarrow B'\cap J_{\omega}\xrightarrow{\iota_{\omega}} B'\cap A_{\omega}\xrightarrow{(\pi_J)_{\omega}}(\pi_J)_{\omega}(B)'\cap (A/J)_{\omega}\longrightarrow 0.\]
Let $h\in (\pi_J)_{\omega}(B)'\cap (A/J)_{\omega}$ and let $g\in A_{\omega}$ with $(\pi_J)_{\omega}(g)=h$. $D\colon =C^*(B,g)$ is then a separable C$^*$-subalgebra of $A_{\omega}$ and $bg-gb\in D\cap J_{\omega}$ holds for every $b\in B$. By (ii) applied to $D$ in place of $B$, we find a positive contraction $e\in (D'\cap J_{\omega})$ such that $ed=de=d$ for all $d\in D\cap J_{\omega}$.
Set $k=(1-e)g\in A_{\omega}$. Then 
\eqa{
    (\pi_J)_{\omega}(k)&=h-(\pi_J)_{\omega}(e)h\\
    &=h\,\,({\rm by}\,e\in J_{\omega}),
} 
and for all $b\in B$, $gb-bg\in D\cap J_{\omega}$, whence $(1-e)(gb-bg)=0$. Thus
\eqa{
    kb-bk&=(1-e)gb-b(1-e)g\\
    &=(1-e)(gb-bg)+(1-e)bg-b(1-e)g\\
    &=(1-e)bg-b(1-e)g\\
    &=0,
}
because $b\in D$ commutes with $e\in D'\cap J_{\omega}$. Thus $k\in B'\cap A_{\omega}$, which finishes the proof. 

\end{proof}
\section{Iterated Ultralimits}
Let $I,J$ be directed sets, and let $\mathcal{U},\mathcal{V}$ be cofinal ultrafilters on $I$ and $J$, respectively. Then the {\it product ultrafilter}, denoted $\mathcal{U}\otimes \mathcal{V}$ is a filter on $I\times J$ (with the partial ordering $(i,j)\le (i',j')$ if $i\le i'$ and $j\le j'$) given by 
\[\mathcal{U}\otimes \mathcal{V}=\{A\subset I\times J; \{i\in I; \{j\in J; (i,j)\in A\}\in \mathcal{V}\}\in \mathcal{U}\}.\]
The next lemma is well-known in the theory of ultrafilters and can be checked by a straightforward computation. 
\begin{lemma}\label{lem: iterated ultralimit}
$\mathcal{U}\otimes \mathcal{V}$ is a cofinal ultrafilter on $I\times J$.  
Moreover, if $(x_{i,j})_{(i,j)\in I\times J}$ is a doubly indexed sequence in a compact Hausdorff space $X$, then 
\[\lim_{(i,j)\to \mathcal{U}\otimes \mathcal{V}}x_{i,j}=\lim_{i\to \mathcal{U}}\lim_{j\to \mathcal{V}}x_{i,j}.\]
\end{lemma}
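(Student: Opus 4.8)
The plan is to verify the two assertions by direct computation, organizing everything around the vertical-section operation. For $A\subseteq I\times J$ and $i\in I$, write $A_i=\{j\in J:(i,j)\in A\}$, so that by definition $A\in\mathcal{U}\otimes\mathcal{V}$ exactly when $\{i\in I:A_i\in\mathcal{V}\}\in\mathcal{U}$. The whole argument rests on the two elementary identities $(A\cap B)_i=A_i\cap B_i$ and $(I\times J\setminus A)_i=J\setminus A_i$, which let each filter-theoretic property be checked one layer at a time: first inside $\mathcal{V}$ on the sections, then inside $\mathcal{U}$ on the index set of the ``good'' $i$'s.

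First I would confirm $\mathcal{U}\otimes\mathcal{V}$ is a proper filter. Since every section of $I\times J$ equals $J\in\mathcal{V}$, the good-index set is all of $I\in\mathcal{U}$, so $I\times J\in\mathcal{U}\otimes\mathcal{V}$; since every section of $\emptyset$ is $\emptyset\notin\mathcal{V}$, the empty set is excluded. Upward closure follows because $A\subseteq B$ forces $A_i\subseteq B_i$ and hence $\{i:A_i\in\mathcal{V}\}\subseteq\{i:B_i\in\mathcal{V}\}$. Closure under intersection uses that $\mathcal{V}$ is a filter, giving $(A\cap B)_i=A_i\cap B_i\in\mathcal{V}$ iff both $A_i,B_i\in\mathcal{V}$, so the good-index set of $A\cap B$ is the intersection of those of $A$ and $B$, which lies in $\mathcal{U}$. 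For the ultra property I would use $(A^c)_i=(A_i)^c$: as $\mathcal{V}$ is an ultrafilter, for each fixed $i$ exactly one of $A_i,(A_i)^c$ belongs to $\mathcal{V}$, so $\{i:A_i\in\mathcal{V}\}$ and $\{i:(A^c)_i\in\mathcal{V}\}$ partition $I$; since $\mathcal{U}$ is an ultrafilter, exactly one of them lies in $\mathcal{U}$, i.e.\ exactly one of $A,A^c$ lies in $\mathcal{U}\otimes\mathcal{V}$. Cofinality is immediate: for $(i_0,j_0)\in I\times J$ the order-interval $\{(i,j):i\ge i_0,\,j\ge j_0\}$ has section equal to $\{j:j\ge j_0\}\in\mathcal{V}$ whenever $i\ge i_0$ and empty otherwise, so its good-index set is $\{i:i\ge i_0\}\in\mathcal{U}$ by cofinality of $\mathcal{V}$ and $\mathcal{U}$.

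For the ultralimit formula, set $y_i:=\lim_{j\to\mathcal{V}}x_{i,j}$ (which exists and is unique by compactness and Hausdorffness of $X$) and $x:=\lim_{i\to\mathcal{U}}y_i$; I claim $\lim_{(i,j)\to\mathcal{U}\otimes\mathcal{V}}x_{i,j}=x$. Given any open neighbourhood $U$ of $x$, the definition of $x$ as a $\mathcal{U}$-limit gives $\{i:y_i\in U\}\in\mathcal{U}$, and for each such $i$ the set $U$ is an open neighbourhood of $y_i=\lim_{j\to\mathcal{V}}x_{i,j}$, so $\{j:x_{i,j}\in U\}\in\mathcal{V}$. Writing $A=\{(i,j):x_{i,j}\in U\}$, this says $A_i\in\mathcal{V}$ for every $i\in\{i:y_i\in U\}$, whence $\{i:A_i\in\mathcal{V}\}\supseteq\{i:y_i\in U\}\in\mathcal{U}$ and thus $A\in\mathcal{U}\otimes\mathcal{V}$. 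As $U$ was an arbitrary neighbourhood of $x$, this is precisely the assertion that $x$ is the $\mathcal{U}\otimes\mathcal{V}$-limit, and uniqueness of limits in the Hausdorff space $X$ finishes the proof. The computation is entirely routine; the only point meriting attention is that no separation beyond Hausdorffness is needed, because one tests the limit directly against the neighbourhood $U$ rather than shrinking it, so the two identities on sections really carry the whole argument.
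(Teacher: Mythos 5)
Your proof is correct. The verification that $\mathcal{U}\otimes\mathcal{V}$ is a cofinal ultrafilter proceeds exactly as in the paper, section by section. Where you genuinely diverge is in the limit identity: the paper starts from $x:=\lim_{(i,j)\to\mathcal{U}\otimes\mathcal{V}}x_{i,j}$ and shows that the iterated limit equals $x$, which forces it to invoke regularity of compact Hausdorff spaces (shrinking a neighbourhood $W$ to $W_1$ with $\overline{W_1}\subset W$, then arguing that each inner limit $x_i$ lies in $\overline{W_1}$). You go in the opposite direction: you name the iterated limit $x$ and verify directly from the definition of $\mathcal{U}\otimes\mathcal{V}$ that $\{(i,j):x_{i,j}\in U\}$ belongs to it for every neighbourhood $U$ of $x$, then conclude by uniqueness of ultrafilter limits in a Hausdorff space. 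This buys a slightly more elementary argument --- compactness is used only to guarantee existence of the three limits, and no separation axiom beyond Hausdorffness enters --- at no cost in length. Both arguments are sound; yours is arguably the cleaner of the two.
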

\begin{proof} The result is well-known, but for the convenience of the reader we include its proof. 
For $X\subset I\times J$ and $i\in I$, we write $X_i=\{j\in J;\ (i,j)\in X\}$. 
First, we show that $\mathcal{U}\otimes \mathcal{V}$ is a filter on $I\times J$. It is clear that $\emptyset\notin \mathcal{U}\otimes \mathcal{V}$. 
 Let $A,B\subset I\times J$ be such that $A\subset B$ and $A\in \mathcal{U}\otimes \mathcal{V}$. Then for each $i\in I$, $A_i\subset B_i(\subset J)$ and $\{i\in I; A_i\in \mathcal{V}\}\in \mathcal{U}$. This shows that $\{i\in I;\ B_i\in \mathcal{V}\}\in \mathcal{U}$, whence $B\in \mathcal{U}\otimes \mathcal{V}$. Next, let $A,B\in \mathcal{U}\otimes \mathcal{V}$. Then for each $i\in I$, we have $A_i\cap B_i=(A\cap B)_i$, whence 
 \[\{i\in I;A_i\in \mathcal{V}\}\cap \{i\in I;\ B_i\in \mathcal{V}\}\subset \{i\in I;\ (A\cap B)_i\in \mathcal{V}\},\]
 which implies that $\{i\in I;\ (A\cap B)_i\in \mathcal{V}\}\in \mathcal{U}$. Therefore $A\cap B\in \mathcal{U}\otimes \mathcal{V}$. This shows that $\mathcal{U}\otimes \mathcal{V}$ is a filter on $I\times J$.\\ \\
 Next, let $(i_0,j_0)\in I\times J$ and let $S:=\{(i,j)\in I\times J;\ i\ge i_0,\ j\ge j_0\}$. For each $i\in I$, 
$S_i=\begin{cases}\{j\in J;\ j\ge j_0\} & (i\ge i_0)\\ \ \ \ \ \emptyset & (\text{otherwise})\end{cases}$, and if $i\ge i_0$, then $\{j\in J;\ j\ge j_0\}\in \mathcal{V}$ because $\mathcal{V}$ is cofinal. Thus, $\{i\in I;\ S_i\in \mathcal{V}\}=\{i\in I;\ i\ge i_0\}\in \mathcal{U}$ because $\mathcal{U}$ is cofinal. Therefore, $\mathcal{U}\otimes \mathcal{V}$ is cofinal. Finally, let $A\subset I\times J$ be such that $A\notin \mathcal{U}\otimes \mathcal{V}$. 
Then because $\mathcal{U},\mathcal{V}$ are ultrafilters, we have 
\eqa{
\{i\in I;\ A_i\in \mathcal{V}\}\notin \mathcal{U}&\Leftrightarrow \{i\in I;\ A_i\notin \mathcal{V}\}\in \mathcal{U}\\
&\Leftrightarrow \{i\in I;\ (J\setminus A_i)=(I\times J\setminus A)_i\in \mathcal{V}\}\in \mathcal{U},
}
 and the last condition is equivalent to $I\times J\setminus A\in \mathcal{U}\otimes \mathcal{V}$. Therefore, $\mathcal{U}\otimes \mathcal{V}$ is a cofinal ultrafilter on $I\times J$. This finishes the proof of the first assertion. We show the second assertion. 
 Set $x:=\lim_{(i,j)\to \mathcal{U}\otimes \mathcal{V}}x_{i,j}$ and $x_i:=\lim_{j\to \mathcal{V}}x_{i,j}\ (i\in I)$. Let $W$ be an open neighborhood of $x$ in $X$. Since a compact Hausdorff space is regular, there exists an open neighborhood $W_1$ of $x$ such that $x\in W_1\subset \overline{W_1}\subset W$. Then $\{(i,j)\in I\times J;\ x_{i,j}\in W_1\}\in \mathcal{U}\otimes \mathcal{V}$, whence $I_0:=\{i\in I;\ \{j\in J;\ x_{i,j}\in W_1\}\in \mathcal{V}\}\in \mathcal{U}$ holds. Let $i\in I_0$. Then $B:=\{j\in J;\ x_{i,j}\in W_1\}\in \mathcal{V}$. If $V$ is any open neighborhood of $x_i$, then $B':=\{j\in J;\ x_{i,j}\in V\}\in \mathcal{V}$, whence $B\cap B'\in \mathcal{V}$ holds. In particular, we can take $j\in B\cap B'$. Then $x_{i,j}\in V\cap W_1\neq \emptyset$. Since $V$ is arbitrary, this shows that $x_i\in \overline{W_1}\subset W$. Therefore $\mathcal{U}\ni I_0\subset \{i\in I;\ x_i\in W\}$, which shows that $\{i\in I;\ x_i\in W\}\in \mathcal{U}$. Since $W$ is arbitrary, we have $\displaystyle \lim_{i\to \mathcal{U}}x_i=x$.  
\end{proof}

\section{Unitary polar decomposition in $A'\cap A_{\omega}$}
The following result was communicated to the authors by Leonel Robert.  
\begin{proposition}\label{prop unitarypd} Let $A$ be a unital separable {\rm C}$^*$-algebra such that $A'\cap A_{\omega}$ has stable rank one. 
Then $A'\cap A_{\omega}$ admits unitary polar decomposition, i.e., for every $x\in A'\cap A_{\omega}$, there exists $u\in U(A'\cap A_{\omega})$ such that $x=u|x|$ holds. 
\end{proposition}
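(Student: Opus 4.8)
The plan is to first derive an \emph{approximate} unitary polar decomposition from the stable rank one hypothesis, and then to upgrade it to an exact one by an explicit reindexing argument inside the ultrapower.

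For the approximate step, set $B:=A'\cap A_\omega$ and fix $x\in B$, and write $h:=|x|=(x^*x)^{1/2}\in B$. Since $B$ has stable rank one, the invertibles are dense in $B$, so I can choose invertible elements $y_k\in B$ with $y_k\to x$. For each $k$ the absolute value $|y_k|=(y_k^*y_k)^{1/2}$ is invertible in $B$, so $u_k:=y_k|y_k|^{-1}$ is a genuine unitary of $B$ satisfying $u_k|y_k|=y_k$. Because $a\mapsto (a^*a)^{1/2}$ is norm-continuous on bounded sets, $\||y_k|-h\|\to 0$, whence
\[ \|u_kh-x\|\le \|u_k(h-|y_k|)\|+\|y_k-x\|\le \||y_k|-h\|+\|y_k-x\|\xrightarrow{k\to\infty}0. \]
After passing to a subsequence I thus obtain unitaries $u_k\in U(B)$ with $\|u_kh-x\|<\tfrac1k$; note that, since we work inside $B=A'\cap A_\omega$ itself, each $u_k$ already commutes with the diagonal copy of $A$.

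For the exact step, represent $x=(x_n)_\omega$ with $\|x_n\|\le\|x\|$; then $h=(|x_n|)_\omega$, as continuous functional calculus passes to the ultrapower. Lift each $u_k$ to a sequence of unitaries $(u_{k,n})_n$ in $U(A)$ (a unitary in a unital ultrapower lifts to unitaries, by replacing approximately unitary representatives by their unitary parts on an $\omega$-large set). Fix a countable dense subset $\{a_1,a_2,\dots\}\subset A$. For each $k$ the set
\[ M_k:=\bigl\{n:\ \|u_{k,n}|x_n|-x_n\|<\tfrac1k,\ \ \max_{j\le k}\|u_{k,n}a_j-a_ju_{k,n}\|<\tfrac1k\bigr\} \]
lies in $\omega$, the first condition coming from $\|u_kh-x\|<\tfrac1k$ and the second from $u_k\in A'\cap A_\omega$. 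Now set $d(n):=\max\{k\le n:\ n\in M_k\}$ and $v_n:=u_{d(n),n}$ (choosing $v_n=1$ when the set is empty). For each fixed $k_0$, every $n\in M_{k_0}$ with $n\ge k_0$ has $d(n)\ge k_0$, so $v_n$ satisfies the $\tfrac1{k_0}$-version of both conditions; as $M_{k_0}\cap\{n\ge k_0\}\in\omega$, this forces $\lim_{n\to\omega}\|v_n|x_n|-x_n\|=0$ and $\lim_{n\to\omega}\|v_na_j-a_jv_n\|=0$ for every $j$. Consequently $u:=(v_n)_\omega$ is a unitary in $A_\omega$ commuting with every $a_j$, hence $u\in U(A'\cap A_\omega)$, and $uh=x$, that is $x=u|x|$.

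The only genuinely delicate point is this upgrade from the approximate decomposition to the exact one: one must arrange, simultaneously and in the limit along $\omega$, exact unitarity, the exact identity $u|x|=x$, and membership of $u$ in the relative commutant. This is precisely what the diagonal reindexing above (an instance of Kirchberg's $\varepsilon$-test) accomplishes, and it is here that the separability of $A$ — which reduces the commutation requirement to the countably many constraints indexed by $\{a_j\}$ — is essential.
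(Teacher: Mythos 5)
Your proof is correct and follows essentially the same route as the paper's: use stable rank one to get an approximate unitary polar decomposition $\|u_k|x|-x\|<1/k$ with $u_k\in U(A'\cap A_\omega)$, then diagonalize over representing sequences to make the identity exact while preserving unitarity and membership in the relative commutant. The only difference is that the paper invokes Kirchberg's $\varepsilon$-test (Lemma \ref{lem: epsilon test}) as a black box at the final step, whereas you carry out the underlying reindexing explicitly — which, as you note yourself, is just an instance of that test.
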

\begin{remark}
We were informed from Ilijas Farah that $A'\cap A_{\omega}$ is a 2-SAW$^*$ algebra in the sense of Pedersen \cite[$\S$3]{PedersenthreequaversMR0983335}, which by \cite[Theorem 3.5]{PedersenthreequaversMR0983335} implies that $A'\cap A_{\omega}$ admits unitary polar decomposition if $A'\cap A_{\omega}$ has stable rank one.  
This follows from the fact that $A'\cap A_{\omega}$ has quantifier-free saturation. See \cite{FarahbookMR3971570} for more results related to a variety of saturation conditions. 
\end{remark}
For the proof, we use the next lemma (see \cite[Lemma A.1]{kirchbergAbelMR2265050}), known as Kirchberg's $\varepsilon$-test:
\begin{lemma}[$\varepsilon$-test]\label{lem: epsilon test}
Let $\omega$ be a free ultrafilter on $\mathbb{N}$. Let $X_1,X_2,\dots$ be any sequence of sets. Suppose that for each $k\in \mathbb{N}$, we are given a sequence $(f_n^{(k)})_{n=1}^{\infty}$ of functions $f_n^{(k)}\colon X_n\to [0,\infty)$. For each $k\in \mathbb{N}$, define a new function $f_{\omega}^{(k)}\colon \prod_{n=1}^{\infty}X_n\to [0,\infty]$ by 
\[f_{\omega}^{(k)}(s_1,s_2,\dots)=\lim_{n\to \omega}f_n^{(k)}(s_n),\ \ \ (s_n)_{n=1}^{\infty}\in \prod_{n=1}^{\infty}X_n.\]
Suppose that for each $m\in \mathbb{N}$ and each $\varepsilon>0$, there exists a sequence $s=(s_1,s_2,\dots)\in \prod_{n=1}^{\infty}X_n$ such that 
\[f_{\omega}^{(k)}(s)<\varepsilon\text{\ \ \ \ for\ }k=1,2,\dots,m.\]
Then there exists a sequence $t=(t_1,t_2,\dots) \in \prod_{n=1}^{\infty}X_n$ with 
\[f_{\omega}^{(k)}(t)=0,\ \ \ \text{for all\ }k\in \mathbb{N}.\]
\end{lemma}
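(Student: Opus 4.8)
The plan is to prove the $\varepsilon$-test by a diagonalization argument carried out against the ultrafilter $\omega$. First I would exhaust the hypothesis quantitatively: for each $m\in\N$, applying the assumption to the first $m$ functions with tolerance $\varepsilon=1/m$ produces a sequence $s^{(m)}=(s^{(m)}_n)_n\in\prod_n X_n$ with $f_\omega^{(k)}(s^{(m)})<1/m$ for $k=1,\dots,m$. Since an ultralimit being strictly below $1/m$ forces the associated index set $\{n: f_n^{(k)}(s^{(m)}_n)<1/m\}$ to lie in $\omega$ (the open set $(-\infty,1/m)$ contains the limit), the set
\[ A_m := \bigl\{\, n\in\N : f_n^{(k)}(s^{(m)}_n)<1/m \text{ for all } k\le m \,\bigr\} \]
belongs to $\omega$, being a finite intersection of members of $\omega$. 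Replacing $A_m$ by $B_m:=A_1\cap\cdots\cap A_m$, I obtain a decreasing chain $B_1\supseteq B_2\supseteq\cdots$ with each $B_m\in\omega$.

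The heart of the argument is to stitch the separate sequences $s^{(m)}$ into a single sequence $t$ that, along $\omega$, eventually agrees with $s^{(m)}$ for arbitrarily large $m$. I would do this by introducing the selector $g(n):=\max\{\,m\le n : n\in B_m\,\}$, with the convention $g(n):=0$ when $n\notin B_1$, and then setting $t_n:=s^{(g(n))}_n$ whenever $g(n)\ge 1$ and letting $t_n$ be an arbitrary fixed element of $X_n$ otherwise. The truncation $m\le n$ keeps $g$ well defined, while the cofiniteness of $\{n:n\ge m_0\}$, which lies in $\omega$ because $\omega$ is free, will force $g$ to take large values on an $\omega$-large set.

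To verify the conclusion I would fix $k\in\N$ and $\varepsilon>0$, choose $m_0\ge k$ with $1/m_0<\varepsilon$, and examine
\[ C := B_{m_0}\cap\{\,n:n\ge m_0\,\}, \]
which lies in $\omega$ as an intersection of two members of $\omega$. For $n\in C$ one has $g(n)=:m'\ge m_0$ directly from the definition of $g$ (since $m_0\le n$ and $n\in B_{m_0}$), so $t_n=s^{(m')}_n$ with $n\in B_{m'}\subseteq A_{m'}$; because $k\le m_0\le m'$, the defining property of $A_{m'}$ yields $f_n^{(k)}(t_n)<1/m'\le 1/m_0<\varepsilon$. Hence $\{\,n:f_n^{(k)}(t_n)<\varepsilon\,\}\supseteq C\in\omega$, and since $\varepsilon$ was arbitrary, $f_\omega^{(k)}(t)=\lim_{n\to\omega}f_n^{(k)}(t_n)=0$. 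As $k$ was arbitrary, $t$ is the required sequence.

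I expect the only genuinely delicate point to be the design of the selector $g$: it must be arranged so that $t$ follows $s^{(m)}$ for every $m$ on an $\omega$-large set simultaneously, and the two ingredients that make this possible, namely the nesting of the $B_m$ and the freeness of $\omega$ (which supplies $\{n:n\ge m_0\}\in\omega$), have to be combined in exactly the manner above. Everything else reduces to routine bookkeeping with finite intersections of members of $\omega$.
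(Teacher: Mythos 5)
Your proof is correct. Note that the paper does not prove this lemma at all: it is quoted as a black box from Kirchberg \cite[Lemma A.1]{kirchbergAbelMR2265050}, so there is no in-paper argument to compare against, and your diagonalization --- choosing witnesses $s^{(m)}$ at tolerance $1/m$, noting that $f_\omega^{(k)}(s^{(m)})<1/m$ forces $\{n : f_n^{(k)}(s_n^{(m)})<1/m\}\in\omega$, forming the nested $\omega$-large sets $B_m=A_1\cap\cdots\cap A_m$, and stitching via the selector $g(n)=\max\{m\le n : n\in B_m\}$ --- is precisely the standard proof of Kirchberg's $\varepsilon$-test. All steps check out: the nesting of the $B_m$ makes $\{m\le n : n\in B_m\}$ an initial segment, so $g$ is well defined and $n\in B_{g(n)}\subseteq A_{g(n)}$, which is exactly what the verification on $C=B_{m_0}\cap\{n : n\ge m_0\}$ uses; the freeness of $\omega$ correctly supplies $\{n : n\ge m_0\}\in\omega$; and bounding $f_n^{(k)}(t_n)<\varepsilon$ on a set in $\omega$ for every $\varepsilon>0$ does give $f_\omega^{(k)}(t)=0$ in $[0,\infty]$. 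One cosmetic remark: the ``arbitrary fixed element of $X_n$'' used when $g(n)=0$ exists because the hypothesis (say with $m=1$, $\varepsilon=1$) already produces a point of $\prod_n X_n$, so each $X_n$ is nonempty; you could avoid even mentioning this by setting $t_n:=s^{(1)}_n$ whenever $n\notin B_1$.
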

\begin{proof}[Proof of Proposition \ref{prop unitarypd}] 
Let $(x_n)_{n=1}^{\infty}$ be a bounded sequence in $A$ representing $x\in A'\cap A_{\omega}$.
Let $\{a_k\mid k\in \N\}$ be a countable dense subset of $A$. 
Define $X_n=U(A)$ for $n\in \N$. For each $k=0,1,2,\dots$, define $f_n^{(k)}\colon X_n\to [0,\infty]$ by 
\[f_n^{(0)}(u_n)=\|u_n|x_n|-x_n\|,\,f_n^{(k)}(u_n)=\|u_na_k-a_ku_n\|,\,\,k\in \N,\,u_n\in X_n.\]
Let $\varepsilon>0$ and $m\in \N$ be given. 
Since $A'\cap A_{\omega}$ has stable rank one, there exists a sequence $(y_j)_{j=1}^{\infty}$  of invertible elements in $A'\cap A_{\omega}$ such that $\|x-y_j\|\xrightarrow{j\to \infty}0$. Then $\||y_j|-|x|\|\xrightarrow{j\to \infty}0$ holds.  Let $v_j=y_j|y_j|^{-1}\in U(A'\cap A_{\omega})$. Then 
$y_j=v_j|y_j|$. 
We have
\eqa{
    \|v_j|x|-x\|&= \|y_j|y_j|^{-1}|x|-|x|\|\\
    &\le \|y_j|y_j|^{-1}(|x|-|y_j|)\|+\|y_j-x\|\\
    &=\||x|-|y_j|\|+\|y_j-x\|\xrightarrow{j\to \infty}0.
}
Therefore we may choose $j\in \N$ for which $\|v_j|x|-x\|<\varepsilon$ holds. Let $(u_n')_{n=1}^{\infty}$ be a sequence in $U(A)$ representing $v_j\in U(A'\cap A_{\omega})$. Then 
$f_{\omega}^{(k)}(u_1',u_2',\dots)<\varepsilon$ holds for $k=0,\dots,m$. 
By Lemma \ref{lem: epsilon test}, there exists $u_n\in X_n\,(n\in \N)$ such that $f^{(k)}_{\omega}(u_1,u_2,\dots)=0$ for every $k\in \N\cup \{0\}$. Thus, $u=(u_n)_{\omega}\in U(A'\cap A_{\omega})$ satisfies $x=u|x|$. 
\end{proof}

\section{Connectedness of $U(A'\cap A_{\omega})$ for AF algerbas}
We include the proof of the next result, which is likely a folklore. 
\begin{proposition}\label{prop F(AF) has connected unitary group}
Let $A$ be a separable unital AF algebra. Then 
$U(A'\cap A_{\omega})$ is connected.
\end{proposition}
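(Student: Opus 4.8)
The plan is to show that every unitary $u\in U(A'\cap A_\omega)$ is connected to the identity by a norm-continuous path of unitaries inside $A'\cap A_\omega$. Since $A$ is a separable unital AF algebra, I would first fix an increasing sequence of finite-dimensional unital subalgebras $F_1\subseteq F_2\subseteq\cdots$ with $\overline{\bigcup_n F_n}=A$, and fix a countable dense subset $\{a_1,a_2,\dots\}$ of $A$ used to detect centrality. The key structural input is that a unitary $u\in A'\cap A_\omega$ can be represented by a sequence $(u_n)_{n=1}^\infty$ of unitaries in $A$ (one may lift to contractions and then correct, or invoke that $A$ is unital so $U(A_\omega)$ lifts), with the property that $\lim_{n\to\omega}\|u_n a-au_n\|=0$ for each $a$ in the dense set, hence for all $a\in A$.

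The main idea is to exploit the AF structure to replace each $u_n$ by a unitary lying in the commutant of a large finite-dimensional subalgebra, for which connectedness is automatic. Concretely, for each $n$ I would choose an index $m(n)\to\infty$ and use a near-commutation/approximation argument to produce a unitary $w_n\in F_{m(n)}'\cap A$ (the relative commutant inside $A$) with $\|u_n-w_n\|\to_{\omega}0$. The heuristic is that a unitary which almost commutes with the finite-dimensional algebra $F_{m(n)}$ is, after a small perturbation, exactly commuting with it; this is where finite-dimensionality is essential, since $F_{m(n)}\cong\bigoplus_j M_{k_j}$ has a tractable unitary structure and one can average or cut down by the matrix units of $F_{m(n)}$ to enforce exact commutation at the cost of an error controlled by the commutator norms. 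Because $F_{m(n)}'\cap A$ is a unital C$^*$-algebra whose unitary group contains $w_n$, and more importantly because these $w_n$ commute with the fixed finite-dimensional block system, I can write each $w_n$ as a product of a bounded number of exponentials $e^{\mathrm{i}h}$ with $h$ self-adjoint in $F_{m(n)}'\cap A$, giving an explicit path $t\mapsto \prod e^{\mathrm{i}t h}$ from $1$ to $w_n$ whose lengths are uniformly controlled.

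Assembling these sequential paths into a single path in $A'\cap A_\omega$ is the decisive step: for each $t\in[0,1]$ the sequence $(\gamma_n(t))_n$ of unitaries defines an element $\gamma(t)\in A_\omega$, and I must check that $\gamma(t)\in A'\cap A_\omega$ for all $t$ and that $t\mapsto \gamma(t)$ is norm-continuous with $\gamma(0)=1$, $\gamma(1)=u$. Norm continuity in the ultrapower follows if the individual paths $\gamma_n$ are equicontinuous in $n$, which the uniform bound on the number and size of the exponential factors provides; the endpoint $\gamma(1)=u$ follows from $\|u_n-w_n\|\to_\omega 0$, and centrality of each $\gamma(t)$ follows because each $\gamma_n(t)$ lies in $F_{m(n)}'\cap A$ with $m(n)\to\infty$, so $\lim_{n\to\omega}\|\gamma_n(t)a-a\gamma_n(t)\|=0$ for every $a\in\bigcup_n F_n$ and hence for all $a\in A$.

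The hard part will be the perturbation step producing $w_n\in F_{m(n)}'\cap A$ from the almost-central $u_n$ with simultaneous control of the exponential-decomposition length, \emph{uniformly} in $n$, since that uniformity is exactly what guarantees equicontinuity and thus norm-continuity of the limiting path. I expect the cleanest route is to first reduce to unitaries that exactly commute with $F_{m(n)}$ (using that $F_{m(n)}'\cap A$ is again AF, or at least has real rank zero / connected unitary group for the relevant finite stage), and then to invoke the standard fact that the identity component of the unitary group of a unital C$^*$-algebra is generated by exponentials of self-adjoints, taking care that the number of factors does not blow up with $n$. An alternative, possibly smoother, approach is to avoid explicit paths by invoking Kirchberg's $\varepsilon$-test (Lemma~\ref{lem: epsilon test}) to directly produce, for the whole path, a representing sequence of unitaries realizing the required centrality and endpoint constraints simultaneously.
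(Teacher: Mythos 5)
Your overall strategy is the same as the paper's: represent $u$ by unitaries lying in $A_{m_n}'\cap A$ with $m_n\to\infty$ along $\omega$, write each as an exponential, and assemble the path. However, the step you yourself flag as ``the hard part'' --- producing the exponential decompositions with control that is \emph{uniform in $n$} --- is left open, and it is precisely the crux. Invoking the general fact that the identity component of $U(B)$ is generated by exponentials does not suffice: for a general unital C$^*$-algebra the number of exponential factors needed for a given unitary is not bounded, and without a uniform bound on both the number of factors and the norms of the generators, the sequence of paths $(\gamma_n)$ need not be equicontinuous, so $t\mapsto(\gamma_n(t))_\omega$ need not be continuous (nor even well-defined as a path from $1$ to $u$). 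Your parenthetical ``taking care that the number of factors does not blow up with $n$'' names the problem but does not solve it.

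The paper closes this gap with two concrete devices. First, the perturbation into the exact relative commutant is done by the conditional expectation $\E_{A_{m_n}}(x)=\int_{U(A_{m_n})}uxu^*\,d\mu(u)$ (Lemma~\ref{lem stability EB}), which satisfies $\|x-\E_{A_{m_n}}(x)\|\le\max_{y\in{\rm Ball}(A_{m_n})}\|xy-yx\|$; combined with compactness of the unit ball of each finite-dimensional $A_{m_n}$ and a careful choice of $m_n$ (Lemma~\ref{lem cs lives in tail}), followed by polar decomposition, this yields unitaries $u_n\in U(A_{m_n}'\cap A)$ with $u=(u_n)_\omega$ --- essentially your averaging idea made precise. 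Second, and decisively, since $A_{m_n}'\cap A$ is again AF, unitaries with \emph{finite spectrum} are dense in its (connected) unitary group, so after a further small perturbation one may take $u_n=e^{\mathrm{i}h_n}$ with $h_n=h_n^*\in A_{m_n}'\cap A$ and $\|h_n\|\le\pi$: a \emph{single} exponential with a uniform norm bound. Then $h=(h_n)_\omega$ lies in $A'\cap A_\omega$ and the path is simply $t\mapsto e^{\mathrm{i}th}$, with no equicontinuity argument needed. If you add the finite-spectrum density argument (or an equivalent real-rank-zero argument giving a uniformly bounded logarithm), your proof goes through and coincides with the paper's.
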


Recall that by Russo--Dye theorem, for unital $C^*$-algebra $A$, any $a\in A$ with $\|a\|<1$ is a convex combination of unitaries. In particular, the closed convex hull of $U(A)$ coincides with the closed unit ball ${\rm Ball}(A)$. 
\begin{lemma}\label{lem stability EB}
Let $A$ be a unital $C^*$-algebra and $B\subset A$ a finite-dimensional unital $C^*$-subalgebra. 
Then there exists a conditional expectation $\E_B\colon A\to B'\cap A$ such that 
\begin{equation}\|a-\E_B(a)\|\le \max_{x\in {\rm Ball}(B)}\|ax-xa\|,\,\,a\in A.\label{eq distanceEB}
\end{equation}
\end{lemma}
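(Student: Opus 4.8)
The plan is to produce $\E_B$ by averaging the adjoint action of $B$ over its unitary group. Since $B$ is finite dimensional, $U(B)$ is a compact group, so it carries a normalized Haar probability measure $\mu$. For each fixed $a\in A$ the map $u\mapsto uau^*$ is norm-continuous on the compact set $U(B)$, so the (vector-valued) integral
\[\E_B(a):=\int_{U(B)} u a u^*\, d\mu(u)\in A\]
is well defined. First I would record the elementary properties: $\E_B$ is linear, unital (as $\mu$ is a probability measure), and completely positive, being a $\mu$-average of the $*$-automorphisms $\Ad(u)$; in particular it is contractive.

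Next I would identify the range as $B'\cap A$ and check that $\E_B$ is a conditional expectation onto it. For any $v\in U(B)$, left invariance of Haar measure gives $v\,\E_B(a)\,v^*=\int_{U(B)}(vu)a(vu)^*\,d\mu(u)=\E_B(a)$, so $\E_B(a)$ commutes with every unitary of $B$; since a unital $C^*$-algebra is the linear span of its unitaries, $\E_B(a)\in B'\cap A$. Conversely, if $a\in B'\cap A$ then $uau^*=a$ for all $u\in U(B)$, whence $\E_B(a)=a$. Thus $\E_B$ is idempotent with range exactly $B'\cap A$, i.e.\ a unital, completely positive, contractive conditional expectation onto $B'\cap A$.

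Finally, for the norm estimate I would use the identity $a-uau^*=(au-ua)u^*$, valid for every $u\in U(B)$, together with $\|u^*\|=1$, to obtain $\|a-uau^*\|=\|au-ua\|$. Passing the norm inside the integral,
\[\|a-\E_B(a)\|=\Bigl\|\int_{U(B)}(a-uau^*)\,d\mu(u)\Bigr\|\le \int_{U(B)}\|au-ua\|\,d\mu(u),\]
and since each $u\in U(B)\subset {\rm Ball}(B)$, the integrand is bounded by $\max_{x\in {\rm Ball}(B)}\|ax-xa\|$; integrating against the probability measure $\mu$ then yields the claimed inequality \eqref{eq distanceEB}.

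There is no deep obstacle here; the one point that genuinely uses the finite-dimensionality hypothesis is the compactness of $U(B)$, which is precisely what makes the Haar-averaging construction available (in general one would need amenability of the acting group). The only other care needed is the standard fact, a consequence of the Russo--Dye theorem recorded above, that a unital $C^*$-algebra is spanned by its unitaries; this is used to pass from commutation with $U(B)$ to commutation with all of $B$.
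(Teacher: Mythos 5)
Your proof is correct and follows essentially the same route as the paper: averaging $\Ad(u)$ over the Haar measure of the compact group $U(B)$, using left invariance to land in $B'\cap A$, and bounding $\|a-\E_B(a)\|$ by the integral of $\|au-ua\|$. The only (harmless) difference is that you use just the trivial inclusion $U(B)\subseteq{\rm Ball}(B)$ to get the stated inequality, whereas the paper additionally invokes Russo--Dye to upgrade this to the equality $\max_{x\in{\rm Ball}(B)}\|ax-xa\|=\max_{u\in U(B)}\|au-ua\|$, which is not needed for the statement as written.
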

\begin{proof}
Let $\mu_B$ be the normalized Haar measure on the compact group $U(B)$, and $x\in A$. 
Define $\E_B\colon A\to A$ by 
\[
    \E_B(x)=\int_{U(B)}uxu^*d\mu_B(u),\,\,x\in A.
\]
Note that $f_x\colon U(B)\ni u\mapsto uxu^*\in A$ is a continuous function taking values in the separable Banach space $A$. Thus, we may perform the Bochner integration of $f_x$ with respect to $\mu_B$, and thus $\E_B(x)\in A$ holds. It is clear that $\E_B$ is a bounded linear map and that $\|\E_B\|\le 1$. Let $v\in U(B)$. 
Then by the left-invariance of $\mu_B$, 
\eqa{
    v\E_B(x)v^*&=\int_{U(B)}vuxu^*v^*d\mu_B(u)=\int_{U(B)}uxu^*d\mu_B(v^*u)\\
    &=\int_{U(B)}uxu^*d\mu_B(u)
    =\E_B(x),
}
whence $\E_B(x)v=v\E_B(x)$ holds. Since $U(B)$ spans $B$, we obtain $\E_B(x)\in B'\cap A$. If moreover $x\in B'\cap A$, then because $uxu^*=x$ for every $u\in U(B)$, we obtain $\E_B(x)=\int_{U(B)}xd\mu_B(u)=x$. Thus, $\E_B$ is a norm one projection, which is therefore a conditional expectation of $A$ onto $B'\cap A$.\\
Next, we show (\ref{eq distanceEB}). Let $a\in A$. Then 
\eqa{
    \|a-\E_B(a)\|&=\left \|\int_{U(B)}(a-uau^*)d\mu_B(u)\right \|\\
    &\le \int_{U(B)}\|a-uau^*\|d\mu_B(u)=\int_{U(B)}\|au-ua\|d\mu_B(u)\\
    &\le \max_{u\in U(B)}\|au-ua\|.
}
Let $y\in {\rm Ball}(B)$ and $\varepsilon>0$. Then by Russo--Dye theorem, there exists $y_0$ of the form $y_0=\sum_{j=1}^m\lambda_ju_j$ for some $\lambda_1,\dots,\lambda_m>0,\,\lambda_1+\dots+\lambda_m=1$ and $u_1,\dots,u_m\in U(B)$, such that $2\|a\|\|y-y_0\|<\varepsilon$. 
Thus 
\eqa{
    \|ay_0-y_0a\|&\le \sum_{j=1}^m\lambda_j\|au_j-u_ja\|\\
    &\le \sum_{j=1}^m\lambda_j\max_{u\in U(B)}\|au-ua\|=\max_{u\in U(B)}\|au-ua\|.
}
Therefore 
\eqa{
    \|ay-ya\|&\le \|a(y-y_0)\|+\|ay_0-y_0a\|+\|(y_0-y)a\|\\
    &<\varepsilon+\max_{u\in U(B)}\|au-ua\|.
}
Since $\varepsilon>0$ is arbitrary and $U(B)\subset {\rm Ball}(B)$, we obtain $\disp \max_{x\in {\rm Ball}(B)}\|ax-xa\|=\max_{u\in U(B)}\|au-ua\|$. 
Therefore we obtain (\ref{eq distanceEB}).
\end{proof}
\begin{lemma}\label{lem cs lives in tail} Let $A$ be a separable unital AF algebra with $A_1\subset A_2\subset \cdots \subset A$ be an increasing sequence of 
finite-dimensional unital $C^*$-subalgebras such that $A=\overline{\bigcup_{n=1}^{\infty}A_n}$. 
Let $\E_n=\E_{A_n}\colon A\to A_n'\cap A$ be a conditional expectation given by Lemma \ref{lem stability EB} applied to the inclusion $A_n\subset A$. Then for every $a\in A'\cap A_{\omega}$, there exist $m_1, m_2, \cdots \in \N$ with $\lim_{n\to \omega}m_n=\infty$ and a bounded sequence $(a_n)_{n=1}^{\infty}\in \ell^{\infty}(A)$ such that $a_n\in A_{m_n}'\cap A\, (n\in \N)$ and $a=(a_n)_{\omega}$. 
\end{lemma}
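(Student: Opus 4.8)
The plan is to fix a bounded representative $(b_n)_{n=1}^{\infty}\in \ell^{\infty}(A)$ of $a$ and to produce the required sequence by applying the conditional expectations $\E_{m_n}$ to $b_n$ for a suitably growing sequence of cut-offs $m_n$. For $k,n\in \N$ set
\[
\delta_n^{(k)}=\max_{y\in {\rm Ball}(A_k)}\|b_ny-yb_n\|,
\]
so that Lemma~\ref{lem stability EB} applied to $A_m\subset A$ gives $\|b_n-\E_m(b_n)\|\le \delta_n^{(m)}$ for every $m$, and note that $\delta_n^{(k)}$ is nondecreasing in $k$ because $A_k\subset A_{k+1}$. The first step is to check that for each \emph{fixed} $k$ one has $\lim_{n\to \omega}\delta_n^{(k)}=0$. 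Indeed, as $A_k$ is finite dimensional, fix a basis of norm-one matrix units $e_1,\dots,e_{r_k}$ of $A_k$ and a constant $C_k$ with $\sum_j|c_j|\le C_k$ whenever $\sum_j c_je_j\in {\rm Ball}(A_k)$; then $\delta_n^{(k)}\le C_k\sum_{j=1}^{r_k}\|b_ne_j-e_jb_n\|$, and since $a\in A'\cap A_{\omega}$ commutes with each $e_j\in A$, every term $\|b_ne_j-e_jb_n\|$ tends to $0$ along $\omega$, hence so does the finite sum.

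Next I would run a diagonal argument against $\omega$ to choose the cut-offs. For each $k$ put $S_k=\{n\in \N:\delta_n^{(k)}<1/k\}$, which lies in $\omega$ by the previous step, and set $T_k=\bigcap_{j=1}^{k}S_j\in \omega$; the sets $T_k$ are decreasing. Define
\[
m_n=\max\{k\le n:n\in T_k\},
\]
with the convention $m_n=1$ when $n\notin T_1$, so that $a_n:=\E_{m_n}(b_n)\in A_{m_n}'\cap A$ is defined for all $n$. I claim $\lim_{n\to \omega}m_n=\infty$ and $\lim_{n\to \omega}\delta_n^{(m_n)}=0$. For the first, fix $N$: if $n\in T_N$ and $n\ge N$, then $N$ belongs to $\{k\le n:n\in T_k\}$, so $m_n\ge N$; since $T_N\cap\{n\ge N\}\in \omega$ (a free ultrafilter contains all cofinite sets), this gives $\{n:m_n\ge N\}\in \omega$. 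For the second, fix $\varepsilon>0$ and choose $K$ with $1/K<\varepsilon$; for $n\in T_K\cap\{n\ge K\}\in \omega$ we have $m_n\ge K$ and $n\in T_{m_n}$, whence $\delta_n^{(m_n)}<1/m_n\le 1/K<\varepsilon$.

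Finally, with $a_n=\E_{m_n}(b_n)\in A_{m_n}'\cap A$, contractivity of the conditional expectations gives $\|a_n\|\le \|b_n\|$, so $(a_n)_{n=1}^{\infty}\in \ell^{\infty}(A)$, while Lemma~\ref{lem stability EB} yields $\|a_n-b_n\|\le \delta_n^{(m_n)}$. By the second limit above, $\lim_{n\to \omega}\|a_n-b_n\|=0$, so $a=(b_n)_{\omega}=(a_n)_{\omega}$, and by construction $\lim_{n\to \omega}m_n=\infty$, as required. The only genuinely delicate point is the simultaneous control in the diagonal step: each $\delta_n^{(k)}$ vanishes along $\omega$ only for $k$ fixed and not uniformly in $k$, so one cannot simply take $m_n=n$; the decreasing tower $(T_k)$ together with the cap $k\le n$ is precisely what reconciles the requirement $m_n\to \infty$ with $\delta_n^{(m_n)}\to 0$.
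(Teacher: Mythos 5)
Your proof is correct and follows essentially the same route as the paper: both show that the commutator sup over ${\rm Ball}(A_k)$ vanishes along $\omega$ for each fixed $k$ (you via norm equivalence on the finite-dimensional $A_k$, the paper via a finite $\varepsilon$-net), build a decreasing tower of $\omega$-large sets to diagonalize the choice of cut-off $m_n$, and then apply $\E_{m_n}$ together with the estimate of Lemma~\ref{lem stability EB}. Your definition $m_n=\max\{k\le n: n\in T_k\}$ is a clean (arguably slightly cleaner) variant of the paper's bookkeeping via $J_{m_n}\setminus J_{m_n+1}$.
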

\begin{proof}
We may assume that $\|a\|\le 1$, and let $(a_n^0)_{n=1}^{\infty}\in \ell^{\infty}(A)$ be a sequence of contractions such that $a=(a_n^0)_{\omega}$. Let $m\in \N$. Since $A_m$ is finite-dimensional, ${\rm Ball}(A_m)$ is compact. Therefore, there exists $d_m\in \N$ and $x_1^{(m)},\dots,x_{d_m}^{(m)}\in {\rm Ball}(A_m)$ such that 
$\min_{1\le k\le d_m}\|x-x_k^{(m)}\|<\frac{1}{m}$ for every $x\in {\rm Ball}(A_m)$. 
To find $m_n$'s we apply a technique similar to \cite[Lemma 3.13]{MR3198856AH}. 
Since $a\in A'\cap A_{\omega}$, we have 
\[J_m^0=\left \{n\in \N\,\middle|\,\left \|a_n^0x_k^{(m)}-x_k^{(m)}a_n^0\right \|<\frac{1}{m},\,k=1,\dots,d_m\right \}\in \omega.\]
Thus $J_m=\bigcap_{\ell=1}^mJ_{\ell}^0\in \omega$, and moreover $J_1\supset J_2\supset J_3\supset \cdots$ holds. Therefore, we have a following decomposition of $\N$ into disjoint subsets 
\[\N=\N\setminus J_1\sqcup \bigsqcup_{k=1}^{\infty}J_k\setminus J_{k+1}.\]
For $n\in J_1$, there exists unique $m_n\in \N$ such that $n\in J_{m_n}\setminus J_{m_n+1}$
Then $a_n=\E_{m_n}(a_n^0)\in A_{m_n}'\cap A$ satisfies $\|a_n\|\le 1$ for every $n\in \N$. For $n\in \N\setminus J_1$, we set $a_n=0$ and $m_n=1$.\\ \\
\textbf{Claim.} $\disp \lim_{n\to \omega}\|a_n^0-a_n\|=0$.\\
Let $\varepsilon>0$. Choose $m_0\in \N$ such that $\frac{1}{m_0}<\frac{\varepsilon}{3}$. 
Let $n\in J_{m_0+1}$. Then there exists unique $k\in \N$ such that $n\in J_{m_0+k}\setminus J_{m_0+k+1}$, whence $m_n=m_0+k$. 
Let $x\in {\rm Ball}(A_{m_n})$. Then there exists $j\in \{1,\dots,d_m\}$ such that $\|x-x_j^{(m_n)}\|<\frac{1}{m_n}$ holds. 
Also, by the definition of $J_{m_n}$, 
$\|a_n^0x_j^{(m_n)}-x_j^{(m_n)}a_n^0\|<\frac{1}{m_n}$ holds. 
It then follows that 
\eqa{
    \|[a_n^0,x]\|&\le \|a_n^0(x-x_j^{(m_n)})\|+\|[a_n^0,x_j^{(m_n)}]\|+\|(x_j^{(m_n)}-x)a_n^0\|\\
    &<\frac{3}{m_n}.
}
Since $x\in {\rm Ball}(A_{m_n})$ is arbitrary, thanks to Lemma \ref{lem stability EB}, we obtain 
\[\|a_n^0-\E_{m_n}(a_n^0)\|\le \max_{x\in {\rm Ball}(A_{m_n})}\|a_n^0x-xa_n^0\|\le \frac{3}{m_n}<\varepsilon.\]
Since $n\in J_{m_0+1}$ is arbitrary and $J_{m_0+1}\in \omega$, we obtain 
$\disp \lim_{n\to \omega}\|a_n^0-a_n\|\le \varepsilon$. Since $\varepsilon$ is arbitrary, we obtain $\disp \lim_{n\to \omega}\|a_n^0-a_n\|=0$. For each $m\in \N$, any $n\in J_m$ satisfies $m_n\ge m$, whence $\disp \lim_{n\to \omega}m_n=\infty$. This finishes the proof. 
\end{proof}

\begin{proof}[Proof of Proposition \ref{prop F(AF) has connected unitary group}]
Let $u\in U(A'\cap A_{\omega})$. Then by Lemma \ref{lem cs lives in tail}, 
there exist natural numbers $(m_n)_{n=1}^{\infty}$ with $\disp \lim_{n\to \omega}m_n=\infty$ and $(a_n)_{n=1}^{\infty}\in \ell^{\infty}(A)$ with $a_n\in A_{m_n}'\cap A,\,(n\in \N)$, such that $u=(a_n)_{\omega}$. Since $\lim_{n\to \omega}\|a_n^*a_n-1\|=0$, we may assume that each $|a_n|$ is invertible and thus $u_n=a_n|a_n|^{-1}\in U(A_{m_n}'\cap A)$ satisfies $u=(u_n)_{\omega}$. Since $A$ is AF and $A_{m_n}$ is finite-dimensional, $A_{m_n}'\cap A$ is also AF. Thus $U(A_{m_n}'\cap A)$ is connected and thus unitaries with finite spectrum are dense in its unitary group. Therefore we may assume that $u_n$ has finite spectrum. Thus $u_n=e^{\ri h_n}$ for some $h_n\in A_{m_n}'\cap A_{\rm sa}$ with $\|h_n\|\le \pi$. Note that $(h_n)_{\omega}\in A'\cap A_{\omega}$: let $x\in A$ be a contraction and $\varepsilon>0$. Then there exists $m_0\in \N$ and $x_0\in A_{m_0}$ such that $\|x-x_0\|<\frac{\varepsilon}{2\pi}$. 
Since $\lim_{n\to \omega}m_n=\infty$, the set $J=\{n\in \N\mid m_n\ge m_0\}$ belongs to $\omega$. For each $n\in J$, $A_{m_0}\subset A_{m_n}$, whence we have $h_n\in A_{m_n}'\cap A\subset A_{m_0}'\cap A$. Therefore 
\eqa{
    \|h_nx-xh_n\|&\le \|h_n(x-x_0)\|+\|h_nx_0-x_0h_n\|+\|(x_0-x)h_n\|\\
    &\le 2\|h_n\|\|x-x_0\|<\varepsilon.
}
This shows that $\disp \lim_{n\to \omega}\|h_nx-xh_n\|\le \varepsilon$, and since $\varepsilon$ is arbitrary, we obtain $h=(h_n)_{\omega}\in A'\cap A_{\omega}$. 
Thus, $u=e^{\ri h}$ is homotopic to 1 inside $A'\cap A_{\omega}$ by a continuous path $u(t)=e^{\ri th},\,t\in [0,1]$. 
\end{proof}


\section*{Acknowledgments}
We would like to thank Professors Hiroki Matui, Mikael R\o rdam for useful discussions on the group of approximately inner automorphisms. We are also grateful to the anonymous referee and professor Leonel Robert for their careful reading and spotting a mistake in the original manuscript.   
We also thank Professor Ilijas Farah for pointing out the undecidability result in Remark \ref{rem P onto undecidable} and for explaining the 2-SAW$^*$-property 
 for the central sequence algebra and Professor Robert for Proposition \ref{prop unitarypd} and the reference \cite{MR3908669BBSTWW}.\\  
H. Ando is supported by Japan Society for the Promotion of Sciences KAKENHI 20K03647. M. Doucha was supported by the GA\v CR project 22-07833K and by the Czech Academy of Sciences (RVO 67985840).
\bibliography{references} 
\end{document}